\documentclass[11pt,reqno]{amsart}

\usepackage[utf8]{inputenc}
\usepackage[english]{babel}
\usepackage{sidecap}                
\usepackage{booktabs}               
\usepackage{float}                  
\usepackage{indentfirst}            
\usepackage{titlesec}               
\usepackage[colorlinks=true,linkcolor=blue,citecolor=blue]{hyperref}
\usepackage{xcolor}
\usepackage{csquotes}
\usepackage{fancyhdr}
\usepackage[new]{old-arrows}        
\usepackage{adjustbox}
\usepackage{makecell}               
\usepackage{array}                  
\usepackage{enumitem}
\usepackage{abstract}

\usepackage{amssymb}
\usepackage{amsthm}
\usepackage{dsfont}                 
\usepackage{mathtools}
\usepackage{mathrsfs}               
\usepackage{tikz-cd}
\usepackage{todonotes}

\usepackage[backend=bibtex,style=numeric,maxbibnames=99]{biblatex}
\renewbibmacro{in:}{}
\AtEveryBibitem{\clearfield{issue}}
\AtEveryBibitem{\clearfield{number}}
\DeclareFieldFormat[article,periodical]{volume}{\mkbibbold{#1}}
\DefineBibliographyExtras{english}{}
\bibliography{bibtex}

\titleformat{\section}
  {\center\normalfont\scshape}{\thesection.}{0.5em}{}

\titleformat{\subsection}[runin]
  {\normalfont\bfseries\filright}{\thesubsection}{0.5 em}{}

\usepackage[margin=1.1in,headheight=20pt,headsep=14pt,footskip=25pt]{geometry}

\addto\captionsenglish{}

\makeatletter
\def\blfootnote{\xdef\@thefnmark{}\@footnotetext}
\makeatother

\newcommand{\Z}{\mathbb{Z}}
\newcommand{\R}{\mathbb{R}}

\newcommand{\C}{\mathbb{C}}

\newcommand{\g}{\mathfrak{g}}

\newcommand{\dbar}{\bar\partial}

\newcommand{\Opoly}[1][p]{\Omega^{#1}_{\mathrm{pol}}}
\newcommand{\Apoly}[1][p,q]{A^{#1}_{\mathrm{pol}}}
\newcommand{\GC}{G_{\C}}
\newcommand{\Gzo}{G^{0,1}}
\newcommand{\Hrat}{H_{\mathrm{rat}}}
\newcommand{\ind}{\mathrm{ind}}
\newcommand{\Ad}{\mathrm{Ad}}
\newcommand{\pr}{\mathrm{pr}}

\DeclareMathOperator{\Aut}{Aut}

\DeclareMathOperator{\GL}{GL}

\numberwithin{equation}{section}

\theoremstyle{plain}

\newtheorem{theorem}{Theorem}[section]
\newtheorem*{theorem*}{Theorem}
\newtheorem{proposition}[theorem]{Proposition}
\newtheorem{corollary}[theorem]{Corollary}
\newtheorem{lemma}[theorem]{Lemma}
\newtheorem{remark}[theorem]{Remark}

\theoremstyle{definition}
\newtheorem{definition}[theorem]{Definition}
\newtheorem{construction}[theorem]{Construction}

\theoremstyle{plain}

\makeatletter
\def\author@andify{%
  \nxandlist {\unskip ,\penalty-1 \space\ignorespaces}%
    {\unskip {} \@@and~}%
    {\unskip \penalty-2 \space \@@and~}%
}
\makeatother

\begin{document}

\title[Dolbeault cohomology of nilmanifolds]{Invariant forms compute the Dolbeault cohomology\\ of complex nilmanifolds}

\author[K.~Hasegawa]{Keizo Hasegawa}
\address{Keizo Hasegawa: Department of Mathematics, Graduate School of Science, Osaka University, Toyonaka, Osaka 560-0043, Japan; and Department of Mathematics, Faculty of Education, Niigata University, Ikarashi-nino-cho, Nishi-ku, Niigata 950-2181, Japan}
\email{hasegawa@math.sci.osaka-u.ac.jp}
\email{hasegawa@ed.niigata-u.ac.jp}

\author[L.~Sillari]{Lorenzo Sillari}
\address{Lorenzo Sillari: Dipartimento di Scienze Matematiche, Fisiche e Informatiche, Unit\`a di Matematica e Informatica, Universit\`a degli Studi di Parma, Parco Area delle Scienze 53/A, 43124 Parma, Italy}
\email{lorenzo.sillari@unipr.it}

\author[A.~Tomassini]{Adriano Tomassini}
\address{Adriano Tomassini: Dipartimento di Scienze Matematiche, Fisiche e Informatiche, Unit\`a di Matematica e Informatica, Universit\`a degli Studi di Parma, Parco Area delle Scienze 53/A, 43124 Parma, Italy}
\email{adriano.tomassini@unipr.it}

\maketitle

\begin{abstract}
\textsc{Abstract.} We prove that the inclusion of left-invariant forms into the Dolbeault complex of a compact nilmanifold $M$ endowed with a left-invariant complex structure $J$ induces an isomorphism in cohomology in every bidegree, settling a long-standing conjecture. As consequences, we show that small deformations of $J$ are still invariant, as conjectured by Hasegawa. We also prove that Bott--Chern, Aeppli, and Frölicher invariants are computed by invariant forms and are independent of the lattice, settling a conjecture of Angella on Bott--Chern cohomology.
\end{abstract}

\blfootnote{  \hspace{-0.55cm} 
{\scriptsize 2020 \textit{Mathematics Subject Classification}. Primary: 22E25, 32G05. Secondary: 17B56, 20J06\\ 
\textit{Keywords: deformations, Dolbeault cohomology, left-invariant complex structure, 
nilmanifold, polynomial forms.}\\
K.H.\ is supported by JSPS (Japan Society for the Promotion of Science) KAKENHI Grant Number 25K07000. L.S.\ is partially supported by INdAM - GNSAGA Project (code E53C24001950001). A.T.\ is partially supported by GNSAGA of INdAM. L.S.\ and A.T.\ are partially supported by the Project PRIN 2022 ``Real and Complex Manifolds: Geometry and Holomorphic Dynamics'' (code 2022AP8HZ9).
}
}

\setcounter{tocdepth}{1}

\section{Introduction}\label{sec:intro}

Let $G$ be a simply connected nilpotent Lie group with a lattice $\Gamma$, so that $M=\Gamma\backslash G$ is a compact nilmanifold, and let $J$ be a complex structure on $M$ induced by a left-invariant complex structure on $G$. Left-invariant forms of type $(p,q)$ form a finite-dimensional subcomplex of the Dolbeault complex of $M$, and their cohomology maps naturally to that of $M$ via
\begin{equation}\label{eq:iota}
\iota\colon H^{p,q}_{\dbar}(\g,J)\longrightarrow H^{p,q}_{\dbar}(M).
\end{equation}
For the de Rham complex, the corresponding inclusion is a quasi-isomorphism, by a classical theorem of Nomizu \cite{Nom54}. The Dolbeault analogue is more subtle, and whether invariant forms compute Dolbeault cohomology depends \emph{a priori} on $J$. Nonetheless, Console and Fino proved that the map $\iota$ is injective for every $(\g,J)$ \cite{CF01}. The question of whether or not $\iota$ is an isomorphism
was raised in \cite{CF01} and \cite{CFGU00} simultaneously, and has become a widely known conjecture (cf.\  \cite{RolSpring}).

\medskip

\noindent\textbf{Conjecture}\textbf{.}
\begin{em}
The map $\iota$ of \eqref{eq:iota} is an isomorphism for all $(p,q)$.
\end{em}
\medskip

This has been known to be true in several cases: for complex parallelizable nilmanifolds \cite{Sak76},
for nilpotent complex structures \cite{CFGU00}, for abelian and rational ones \cite{CF01}, 
and in complex dimension at most three \cite{FRR19, RTW20}. 
We refer to \cite{RolSpring} for a comprehensive review.

Outside these classes, the conjecture has remained open for over two decades. In this paper, we prove it in full generality.

\begin{theorem}\label{thm:main}
For every compact nilmanifold $M=\Gamma\backslash G$ with left-invariant complex structure $J$, the map $\iota$ of \eqref{eq:iota} is an isomorphism for all $(p,q)$.
\end{theorem}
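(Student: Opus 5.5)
Since Console and Fino proved that $\iota$ is injective for every $(\g,J)$ \cite{CF01}, the plan is to prove surjectivity. Every Dolbeault class on $M$ should be represented, up to $\dbar$-exact terms, by a left-invariant form. The macros in the preamble ($\Opoly$, $\Apoly$, $\GC$, $\Gzo$, $\Hrat$) suggest the route: go through \emph{polynomial} forms on $G$ and use the unipotent algebraic structure of nilpotent groups.

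\textbf{Step 1: replace $M$ by $G$ with polynomial coefficients.} Let $\Apoly$ be the space of $(p,q)$-forms on $G$ whose coefficients, in a left-invariant coframe of type $(1,0)\oplus(0,1)$, are polynomials in exponential coordinates (equivalently, in Malcev coordinates adapted to $\Gamma$). This is a $\dbar$-subcomplex, because left-invariant vector fields act on polynomial functions by locally nilpotent derivations. Next I would show that the $\Gamma$-invariant part of this complex computes $H^{p,q}_{\dbar}(M)$, in two moves. First, by Hodge theory for an invariant Hermitian metric, harmonic representatives are smooth, and their Fourier/isotypic decomposition under the right regular action of $G$ on $L^2(\Gamma\backslash G)$ splits off the trivial isotypic component. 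Second, the nontrivial components should contribute no cohomology. The expected tool is an elliptic/Kirillov-orbit argument in the style of Nomizu's induction, reducing along a central series.

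\textbf{Step 2: induction on a torus-bundle tower.} Choose a $J$-compatible filtration as far as possible. Since $J$ need not be nilpotent, I would instead use the rational/central series $\mathfrak z\subset\g$ and the associated fibration $T\to M\to M'$, with $T$ a real torus. The complex structure does not in general descend, so I would pass to the complexification: consider $\GC$ and the subgroup $\Gzo$ integrating $\g^{0,1}$. Then $\dbar$-cohomology of $G$-forms becomes Lie algebra cohomology of $\g^{0,1}$ with coefficients in the $\g^{0,1}$-module of functions, in the spirit of $H^{p,q}_{\dbar}\cong H^q(\g^{0,1};\Lambda^p(\g^{1,0})^*\otimes C^\infty)$. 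The key computation is then $H^q(\g^{0,1};\Lambda^p\otimes \mathcal F)$ for $\mathcal F$ either polynomial functions or an irreducible unitary representation occurring in $L^2(\Gamma\backslash G)$.

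\textbf{Step 3: vanishing on nontrivial representations.} For a nontrivial irreducible unitary representation $\pi$ of $G$ (Kirillov: induced from a character of a polarizing subalgebra), I would show that $H^\bullet(\g^{0,1};\Lambda^p(\g^{1,0})^*\otimes\pi^\infty)=0$. The method is to find a central element $Z$ of $\g$ on which $\pi$ acts by a nonzero scalar $i\lambda$, or, if $\pi$ is trivial on the center, to descend to $\g/\mathfrak z$ and induct. Writing $Z=Z^{1,0}+Z^{0,1}$, one has $Z^{0,1}\in\g^{0,1}$. Since $\g^{0,1}$ need not have $Z^{0,1}$ central, I would use a Hochschild--Serre spectral sequence relative to the ideal generated by $Z^{0,1}$ in $\g^{0,1}$, with $\pi(Z^{0,1})$ acting invertibly after a finite filtration argument. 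The operator $\bar Z$ acts on the smooth vectors; combined with $Z$ acting as $i\lambda$, this makes the Cartan homotopy $\iota_{Z^{0,1}}$ produce a contracting homotopy up to nilpotent corrections. The trivial representation then returns exactly the invariant forms $H^{p,q}_{\dbar}(\g,J)$.

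\textbf{Main obstacle.} The hard step is Step 3 when $Z^{0,1}$ is not central in $\g^{0,1}$, i.e.\ when $J\mathfrak z\not\subset\mathfrak z$, which is precisely the non-nilpotent case where earlier methods fail. My first attempt would be to filter $\g^{0,1}$ by $\mathrm{ad}$-nilpotency, so that $\mathrm{ad}_{Z^{0,1}}$ is nilpotent on $\g^{0,1}$. Then $\pi(Z^{0,1})=\tfrac12\pi(Z)+\tfrac{i}{2}\pi(JZ)$ is the sum of an invertible scalar and a derivation that is nilpotent on the associated graded of a suitable filtration of the polynomial/smooth-vector module. This would make the Lie derivative $L_{Z^{0,1}}$ invertible on the graded complex and kill its cohomology. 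Getting this filtration to be exhaustive on $\pi^\infty$, where nilpotency is only formal, and controlling analytic convergence is where I expect the real work to lie. Polynomial forms $\Apoly$ are presumably introduced exactly to make this algebraic.
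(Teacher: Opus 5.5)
\textbf{There is a genuine gap.} You reduce everything to the vanishing of $H^\bullet(\g^{0,1};\mathcal E\otimes\pi^\infty)$, with $\mathcal E=\Lambda^p(\g^{1,0})^*$, for every nontrivial irreducible unitary $\pi$ occurring in $L^2(\Gamma\backslash G)$. By your own Hodge-theoretic isotypic splitting, this is essentially equivalent to the theorem, and you do not prove it. The mechanism you propose for it cannot work, for three reasons.

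\emph{Step~1 is circular as stated.} A $\Gamma$-invariant polynomial in exponential coordinates is determined by its values on a compact fundamental domain, so it is bounded and hence constant. The $\Gamma$-invariant part of your polynomial complex is therefore exactly $\Lambda^{\bullet,\bullet}\g_\C^*$, and the claim that it computes $H^{p,q}_{\dbar}(M)$ is the theorem itself.

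\emph{Step~3's decomposition fails.} The operator $T=\frac i2\,d\pi(JZ)$ is symmetric on $\pi^\infty$, and a symmetric operator that is locally nilpotent vanishes: if $T^mv=0$ then $\|T^{m-1}v\|^2=\langle T^{m-2}v,T^mv\rangle=0$, and one descends. Nilpotency on the graded pieces of an exhaustive filtration forces local nilpotency. So your decomposition of $d\pi(Z^{0,1})$ as an invertible scalar plus a graded-nilpotent part exists only when $d\pi(JZ)=0$.

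\emph{The descent is meaningless.} ``Descend to $\g/\mathfrak z$ and induct'' makes no sense when $\mathfrak z\cap J\mathfrak z=0$: then $\g^{0,1}$ injects into $(\g/\mathfrak z)_\C$ and induces no complex structure on the quotient. The same obstruction is why Step~2 cannot be run as an induction on complex torus bundles. So the crux is left open exactly where non-nilpotent $J$ causes trouble.

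\textbf{The missing idea.} Centrality of $Y\coloneqq\frac12(Z+iJZ)$ in $\g^{0,1}$ is irrelevant. Cartan's formula $\theta_Y=d\,i_Y+i_Y\,d$ holds for any $Y\in\g^{0,1}$ on $C^\bullet(\g^{0,1};\mathcal E\otimes\pi^\infty)$. Moreover $\theta_Y=N+1\otimes d\pi(Y)$, where $N$ is nilpotent on the finite-dimensional factor $\Lambda^\bullet(\g^{0,1})^*\otimes\mathcal E$ and commutes with $d\pi(Y)$. So it suffices that $d\pi(Y)$ be bijective on $\pi^\infty$.

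Choose $Z$ central modulo the ideal $\ker d\pi$; no descent of $J$ is needed. Then $d\pi(Z)=i\lambda$ with $\lambda\neq0$ real, and $d\pi(Y)=\frac i2(\lambda+S)$ with $S=d\pi(JZ)$ skew-symmetric. Injectivity follows from $\|(\lambda+S)v\|^2=\lambda^2\|v\|^2+\|Sv\|^2$. For surjectivity, take $\lambda>0$ (the case $\lambda<0$ is symmetric) and set $v=\int_0^\infty e^{-\lambda t}\pi(\exp(-tJZ))u\,dt$. This solves $(\lambda+S)v=u$ and lies in $\pi^\infty$, because $\|d\pi(D)\pi(\exp(-tJZ))u\|=\|d\pi(e^{t\,\mathrm{ad}(JZ)}D)u\|$ grows only polynomially in $t$ by nilpotency. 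Each isotypic component of a harmonic form is then $\dbar$-exact and $\dbar^*$-closed, hence zero. No uniformity in $\pi$ is required.

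\textbf{Comparison with the paper.} The paper avoids harmonic analysis entirely. It uses the polynomial biholomorphism $(G,J)\cong\C^n$ of Theorem~\ref{thm:hrstw} and identifies $H^{p,q}_{\dbar}(M)\cong H^q(\Gamma,\Omega^p(\C^n))$ through a Cartan--Leray double complex. It obtains polynomial representatives by weighted H\"ormander estimates (Theorem~\ref{thm:E1}). It computes $H^q(\Gamma,\Opoly)\cong H^{p,q}_{\dbar}(\g,J)$ via local unipotence, Hochschild's theorem and exactness of $\ind_{\Gzo}^{\GC}$ over the affine $X=\GC/\Gzo$ (Theorem~\ref{thm:E2}). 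It then closes with a dimension count against the injectivity of \cite{CF01}.

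A completed version of your route would be shorter and self-contained: it needs no Theorem~\ref{thm:hrstw}, and injectivity follows from invariant Hodge theory. The paper's route yields, as by-products, polynomial representatives on the cover and a group-cohomological description of $H^{p,q}_{\dbar}(M)$.
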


The starting point of the proof is a recent theorem of Hasegawa, Rollenske, Sillari, Tomassini and Wehler \cite{HRSTW26}: the universal cover $(G,J)$ of a complex nilmanifold is biholomorphic to $\C^n$ by a map $\Phi$ that is polynomial with polynomial inverse in exponential coordinates. Transporting the deck action through $\Phi$ presents $M$ as a quotient
\[
M\cong\C^n/\rho(\Gamma),\qquad \rho(\gamma)=\Phi\circ L_\gamma\circ\Phi^{-1}\in\Aut_{\mathrm{pol}}(\C^n),
\]
of $\C^n$ by polynomial automorphisms of uniformly bounded degree. On this model, a Cartan--Leray double complex identifies $H^{p,q}_{\dbar}(M)$ with the group cohomology $H^q(\Gamma,\Omega^p(\C^n))$ of holomorphic $p$-forms. The argument splits into two parts, connected to each other by the group cohomology of the submodule $\Opoly \subset \Omega^p(\C^n)$ of forms with polynomial coefficients.

The first part (Theorem~\ref{thm:E1}) shows, by means of H\"ormander's $L^2$ estimates and the Cauchy integral formula, that $H^q(\Gamma,\Opoly)\to H^q(\Gamma,\Omega^p(\C^n))$ is surjective: every Dolbeault class of $M$ has a representative that is polynomial on the cover. The second part (Theorem~\ref{thm:E2}) establishes an isomorphism $H^q(\Gamma,\Opoly) \cong H^{p,q}_{\dbar}(\g,J)$. Its main input (Theorem~\ref{thm:unip}) is that $\Gamma$ acts on the polynomial ring $\C[w]$, and on $\Opoly$, locally finitely and locally unipotently. Surjectivity and the isomorphism, together with injectivity of $\iota$, prove the conjecture (Section \ref{sec:proof}).

The reduction to group cohomology is classical, and by itself is available for any manifold with a Stein universal cover. What is new and essential is the passage to forms of polynomial growth, the use of $\C^n$ as universal cover, and local unipotence of the $\Gamma$-action. These all follow from nilpotency of $G$. Indeed, there are solvmanifolds with Stein universal covers, where $\iota$ fails to be surjective \cite{Kas13}.
\medskip

Theorem~\ref{thm:main} has consequences beyond the equality of Hodge numbers. Deformation theory of $M$ is governed by Dolbeault cohomology with values in the holomorphic tangent bundle, which Theorem~\ref{thm:main} also computes by invariant forms: the invariant Kodaira--Spencer algebra of $(\g,J)$ includes into that of $M$ as a quasi-isomorphism (Theorem~\ref{thm:tangent}). This makes the deformations of $M$ explicit and settles the second part of a conjecture of Hasegawa \cite{Has10}.

\begin{theorem}[Theorem~\ref{thm:hasegawa}]
Every small deformation of a compact complex nilmanifold $M=\Gamma\backslash G$ is again a nilmanifold with left-invariant complex structure. More precisely, for a smooth family $\{M_t\}$ of compact complex manifolds with $M_0=M$, the nearby fibers are $M_t\cong\Gamma_t\backslash\C^n$ for uniform lattices $\Gamma_t\cong\Gamma$ of polynomial automorphisms of $\C^n$, of degree bounded uniformly in $t$.
\end{theorem}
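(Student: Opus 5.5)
The plan is to combine Kuranishi theory with the quasi-isomorphism of Kodaira--Spencer algebras (Theorem~\ref{thm:tangent}), and then to read off the polynomial presentation from the theorem of \cite{HRSTW26}.

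\emph{Step 1: reduction to the invariant Kuranishi family.} By Theorem~\ref{thm:tangent}, the inclusion of the invariant dg Lie algebra $(\Lambda^{0,\bullet}\g^*\otimes\g^{1,0},\dbar,[\cdot,\cdot])$ into the Kodaira--Spencer algebra $A^{0,\bullet}(M,T^{1,0}M)$ is a quasi-isomorphism of dg Lie algebras. Quasi-isomorphic dg Lie algebras have isomorphic deformation functors (Goldman--Millson). Concretely, I would fix an invariant Hermitian metric and choose $\dbar^*$ and Green operators that preserve invariant forms; this is possible because the Laplacian commutes with left translations. Since $H^1$ and $H^2$ are represented by invariant harmonic forms, the Kuranishi construction
\[
\varphi(t)=\textstyle\sum_i t_i\eta_i+\tfrac12\dbar^*G[\varphi(t),\varphi(t)]
\]
stays inside invariant forms. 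So every $\varphi(t)$ is a left-invariant $(0,1)$-form with values in $T^{1,0}$. The obstruction map factors through invariant $H^2$, so the Kuranishi space is cut out inside invariant $H^1$. Completeness of the Kuranishi family then shows that every small deformation $M_t$ is biholomorphic to $(M,J_t)$ for some left-invariant integrable $J_t$ close to $J$.

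\emph{Step 2: polynomial presentation with uniform degree.} Each $(\Gamma\backslash G,J_t)$ is a nilmanifold with a left-invariant complex structure. Applying \cite{HRSTW26} gives a biholomorphism $\Phi_t\colon(G,J_t)\to\C^n$ that is polynomial with polynomial inverse in exponential coordinates, so $M_t\cong\Gamma_t\backslash\C^n$ with $\Gamma_t=\Phi_t\Gamma\Phi_t^{-1}\cong\Gamma$. What remains is to bound the degrees uniformly in $t$. I would go back to the construction in \cite{HRSTW26}: the holomorphic coordinates are obtained by integrating a basis of $(1,0)$-forms along a nilpotent filtration, iteratively in a number of steps bounded by the nilpotency step of $\g$ and by $n$. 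The degrees are then controlled only by $\dim\g$ and the step, and not by the structure constants, which depend analytically on $t$. The degree of $\Phi_t\circ L_\gamma\circ\Phi_t^{-1}$ is therefore bounded by the product of those bounds and the degree of left translation in exponential coordinates, which depends only on the nilpotency step.

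\emph{Main obstacle.} The step I expect to be hardest is the uniformity in Step 2. The filtration used in \cite{HRSTW26} may jump as $t$ varies, because the dimensions of certain $J_t$-invariant subspaces are only semicontinuous. My first attempt would be a degree bound depending only on $n$ and the nilpotency class, valid for every $J$. If that fails, I would use that the jumping loci form a proper analytic subset and treat finitely many strata after shrinking the base.
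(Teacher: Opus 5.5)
Your proposal is correct and follows the paper's proof. The paper simply cites \cite[Theorem~2.6]{RolJLMS} for your Step~1; Rollenske's theorem is exactly the invariant-Kuranishi argument you sketch, with Theorem~\ref{thm:main} supplying its hypothesis. The paper then applies \cite{HRSTW26} to each $(\g,J_t)$ and obtains uniformity from Proposition~\ref{prop:uniform:degree}, taking your ``first attempt'' route by citing \cite{Weh26} for bounds on $\deg\Phi_t$, $\deg\Phi_t^{-1}$ and BCH that depend only on the nilpotency step, so your stratification fallback is not needed.
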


By complex conjugation, Theorem~\ref{thm:main} provides an $E_1$-isomorphism of double complexes. Consequently, every invariant of the Dolbeault double complex preserved by $E_1$-isomorphisms is computed by invariant forms and is independent of the choice of lattice. (Theorem~\ref{thm:E1iso}, Corollary~\ref{cor:zigzag}). In particular, this settles a conjecture of Angella \cite[Conjecture~3.10]{Ang13} on the Bott--Chern cohomology of nilmanifolds.

\section{Preliminaries and notation}\label{sec:prelim}

\subsection{Nilmanifolds and invariant complex structures.}\label{sec:nilm}
Throughout, $G$ is a simply connected nilpotent Lie group with a left-invariant complex structure $J$, $\Gamma\subset G$ is a lattice, and $M=\Gamma\backslash G$ is the associated compact nilmanifold, endowed with the induced invariant complex structure. Denote by $L_g$ left translation by $g\in G$. We use exponential coordinates on $G$, in which $\exp\colon\g\to G$ is a diffeomorphism and the group law is polynomial by nilpotency and the Baker--Campbell--Hausdorff formula (BCH), which, for $X$, $Y \in \g$, is given by 
\[
\log(e^X e^Y) = X + Y + \frac{1}{2}[X, Y]
+ \frac{1}{12}\left( [X, [X, Y]] + [Y, [Y, X]] \right) 
+ \ldots
\]

Denote by $\g$ the real Lie algebra of $G$ and by $\g_\C=\g\otimes\C=\g^{1,0}\oplus\g^{0,1}$ its $\pm i$-eigenspace decomposition under $J$, with projections $\pr^{1,0}$ and $\pr^{0,1}$. Integrability of $J$ means that $\g^{1,0}$ and $\g^{0,1}$ are complex subalgebras. Set $n \coloneqq \dim_\C\g^{1,0}$, so $\dim_\R\g = 2n$. We fix a basis $Z_1,\dots,Z_n$ of $\g^{1,0}$ and denote by $\zeta^1,\dots,\zeta^n$ the dual coframe of left-invariant $(1,0)$-forms, so that $\zeta^a(Z_b)=\delta_{ab}$ and $\zeta^a(\bar Z_b)=0$. We use the abbreviation
\[
\zeta^{ab}=\zeta^a\wedge\zeta^b,\qquad \zeta^{a\bar b}=\zeta^a\wedge\bar\zeta^b,\qquad \bar\zeta^{ab}=\bar\zeta^a\wedge\bar\zeta^b.
\]

\subsection{Invariant Dolbeault cohomology.}\label{sec:invdolbeault}
The left-invariant complex forms on $(G,J)$ of type $(p,q)$ are
\[
\Lambda^{p,q}\g_\C^*\ \coloneqq\ \Lambda^p(\g^{1,0})^*\otimes\Lambda^q(\g^{0,1})^*,
\]
and $\Lambda^\bullet\g_\C^*=\bigoplus_{p,q}\Lambda^{p,q}\g_\C^*$. The Chevalley--Eilenberg differential of $\g_\C$ preserves left-invariance, and integrability of $J$ is equivalent to $d\big(\Lambda^{1,0}\g_\C^*\big)\subseteq\Lambda^{2,0}\g_\C^*\oplus\Lambda^{1,1}\g_\C^*$. Consequently $d=\partial+\dbar$ on $\Lambda^\bullet\g_\C^*$ with
\[
\partial\colon\Lambda^{p,q}\g_\C^*\to\Lambda^{p+1,q}\g_\C^*,\qquad \dbar\colon\Lambda^{p,q}\g_\C^*\to\Lambda^{p,q+1}\g_\C^*,\qquad \dbar^2=0 .
\]
The \emph{invariant Dolbeault cohomology} of $(\g,J)$ is
\[
H^{p,q}_{\dbar}(\g,J)\ \coloneqq\ \frac{\ker\big(\dbar\colon\Lambda^{p,q}\g_\C^*\to\Lambda^{p,q+1}\g_\C^*\big)}{\dbar\,\Lambda^{p,q-1}\g_\C^*} \quad \text{with} \quad h^{p,q}(\g,J)\coloneqq\dim_\C H^{p,q}_{\dbar}(\g,J).
\]
Left-invariant forms descend to $M$ and $\dbar$ commutes with the projection, so there is a natural map
\[
\iota\colon H^{p,q}_{\dbar}(\g,J)\longrightarrow H^{p,q}_{\dbar}(M),
\]
the object of Theorem~\ref{thm:main}. We write $h^{p,q}(M)\coloneqq\dim_\C H^{p,q}_{\dbar}(M)$, which is finite because $M$ is compact.

The algebra $\g^{0,1}$ acts on $\g_\C/\g^{0,1}\cong\g^{1,0}$ by $\bar X\cdot Y=\pr^{1,0}[\bar X,Y]$. Dualizing and taking exterior powers gives a $\g^{0,1}$-module $\mathcal{E} \coloneqq \Lambda^p(\g^{1,0})^*$. Commutation of exterior powers gives the identification
\[
\Lambda^{p,q}\g_\C^*\ =\ \Lambda^q(\g^{0,1})^*\otimes \mathcal{E} \ =\ C^q(\g^{0,1}, \mathcal{E} ),
\]
the degree-$q$ term of the Chevalley--Eilenberg complex of $\g^{0,1}$ with coefficients in $\mathcal{E}$. Under this identification $\dbar$ is the Chevalley--Eilenberg differential.

\subsection{Group cohomology and rational modules.}\label{sec:groupcoh}\label{sec:rational}
Group cohomology is taken in the abstract (discrete) sense. We consider $\C$-vector spaces $V$ with a $\C$-linear $\Gamma$-action, and $H^q(\Gamma,V) = \mathrm{Ext}^q_{\Z\Gamma}(\Z,V)$. We compute it with the homogeneous bar complex, in which $C^q(\Gamma,V)$ is the space of $\Gamma$-equivariant maps $f\colon\Gamma^{q+1}\to V$, that is $f(\gamma\gamma_0,\dots,\gamma\gamma_q)=\gamma\cdot f(\gamma_0,\dots,\gamma_q)$, with coboundary
\[
(\delta_\Gamma f)(\gamma_0,\dots,\gamma_{q+1})=\sum_{k=0}^{q+1}(-1)^k\,f(\gamma_0,\dots,\widehat{\gamma_k},\dots,\gamma_{q+1}),
\]
so that $H^0(\Gamma,V)=V^\Gamma$. In the inhomogeneous normalization, this corresponds to
\[
\bar f(\gamma_1,\dots,\gamma_q) \coloneqq f(e,\gamma_1,\gamma_1\gamma_2,\dots,\gamma_1\cdots\gamma_q).
\]
No topology on $V$ is used: the cochains are arbitrary set maps.
\medskip

Let $H$ be a linear algebraic group over $\C$. A \emph{rational $H$-module} is a $\C$-vector space $V$ with a linear $H$-action such that $V$ is the union of its finite-dimensional $H$-stable subspaces $V'$, on each of which $H$ acts through a morphism of algebraic groups $H\to\GL(V')$. It is \emph{locally unipotent} if each such action is unipotent, i.e., admits an $H$-stable complete flag with trivial one-dimensional subquotients. The \emph{rational cohomology} $\Hrat^\bullet(H,-)$ is the derived functor of $V\mapsto V^H$ on rational $H$-modules, computed by the Hochschild complex $C^t(H,V)=\C[H]^{\otimes t}\otimes V$. For a closed subgroup $K\subseteq H$ and a rational $K$-module $W$, the \emph{algebraically induced} module is
\[
\ind_K^H W\ \coloneqq\ \big(\C[H]\otimes W\big)^K\ =\ \{\,f\colon H\to W \text{ regular}\ :\ f(hk)=k^{-1}\!\cdot f(h)\ \ \forall h\in H,\ k\in K\,\},
\]
a rational $H$-module for $(x\cdot f)(h)=f(x^{-1}h)$. It is the space of regular sections of the associated bundle $H\times_K W\to H/K$, with $\ind_K^H\C=\C[H/K]$. The functor $\ind_K^H$ is right adjoint to restriction, and Frobenius reciprocity reads $(\ind_K^H W)^H=W^K$.

\section{The polynomial model of the universal cover}\label{sec:setup}

In this section we show how the polynomial biholomorphism of \cite{HRSTW26} provides the main ingredients of the rest of the paper: the deck representation $\rho\colon G\to\Aut_{\mathrm{pol}}(\C^n)$, a uniform bound $D$ on the $w$-degree of its values, and the $\Gamma$-module $\Opoly$ of polynomial holomorphic forms.

\subsection{The universal cover of \texorpdfstring{$M$}{}.}
The universal cover of $M$ is biholomorphic to $\C^n$, and the biholomorphism may be taken polynomial. This is the main theorem of \cite{HRSTW26}, which settled a conjecture of Hasegawa \cite{Has09,Has10}. We restate it in the form used below.

\begin{theorem}\label{thm:hrstw}
Let $(G,J)$ be a simply connected nilpotent Lie group with a left-invariant complex structure. There is a biholomorphism $\Phi\colon(G,J)\to\C^n$ which, in exponential coordinates on $G$, is polynomial with polynomial inverse.
\end{theorem}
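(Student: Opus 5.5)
This statement is the main theorem of \cite{HRSTW26}, restated here in exponential coordinates, so within this paper we cite it. Below we sketch how we would prove it from scratch, since the structure of the argument also explains why the degree bounds used later are uniform.

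The plan is induction on $\dim_\R\g$ along a $J$-adapted central filtration. The first step is to find a nonzero $J$-invariant ideal $\mathfrak a\subseteq\g$ contained in the center $\mathfrak z$ of $\g$. The natural candidate is $\mathfrak z\cap J\mathfrak z$, and we have to show it is nonzero. If it vanishes, we would look instead at the ascending series $\mathfrak a_k$ defined by Cordero--Fern\'andez--Gray--Ugarte, which is built from $J$-stable pieces, and use the fact that the largest $J$-invariant subspace of the center is nontrivial for nilpotent $\g$ with integrable $J$. This is a known result of Salamon type, and we would take it as the base ingredient.

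Granting such an $\mathfrak a$, let $A=\exp\mathfrak a$, a closed central complex subgroup isomorphic to $\C^k$. The quotient $(G/A,J)$ is again a simply connected nilpotent group with left-invariant complex structure. By induction there is a polynomial biholomorphism $\Phi'\colon G/A\to\C^{n-k}$. The projection $\pi\colon G\to G/A$ is then a holomorphic principal $\C^k$-bundle. In exponential coordinates we fix a linear splitting $\g=\mathfrak a\oplus\mathfrak b$, which gives a polynomial section $s$ of $\pi$. We then need holomorphic fibre coordinates $w''$ on $G$ of the form $w''=z_{\mathfrak a}+h(\pi(g))$, where $z_{\mathfrak a}$ is the linear complex coordinate on $\mathfrak a$. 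Holomorphicity of $w''$ amounts to the equation $\dbar h=\beta$ on $G/A\cong\C^{n-k}$. Here $\beta$ is a $\dbar$-closed $(0,1)$-form with $\mathfrak a^{1,0}$-values that measures the failure of $s$ to be holomorphic. Because the group law and $\Phi'$ are polynomial, $\beta$ has polynomial coefficients in the coordinates of $\C^{n-k}$.

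The key step, and the main obstacle, is solving $\dbar h=\beta$ with $h$ a polynomial in the coordinates and their conjugates. A polynomial $\dbar$-closed $(0,1)$-form on $\C^m$ has a polynomial primitive: we integrate term by term along $\bar w$ variables, using the Koszul homotopy $h=\sum_j\bar w_j\int_0^1\beta_j(w,t\bar w)\,dt$. This preserves polynomiality and is exact since $\beta$ is closed. The delicate point is compatibility with exponential coordinates: $\Phi'$ is polynomial in real exponential coordinates, which are polynomial in $(w',\bar w')$ through $(\Phi')^{-1}$. So every expression stays polynomial, and the total map $\Phi=(\Phi'\circ\pi,\ z_{\mathfrak a}+h\circ\pi)$ is polynomial.

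The inverse is triangular. Given $(w',w'')$, we recover $\pi(g)=(\Phi')^{-1}(w')$ polynomially, then recover the $\mathfrak a$-component as $z_{\mathfrak a}=w''-h$. Reassembling $g=s(\pi(g))\exp(\cdot)$ by BCH is polynomial, so $\Phi^{-1}$ is polynomial. Finally, $\Phi$ is a holomorphic bijection, hence a biholomorphism, which completes the induction.
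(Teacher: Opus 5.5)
Citing \cite{HRSTW26} is exactly what the paper does; it adds only that polynomiality of $\Phi^{-1}$ comes out of the proof there, although it is not stated. Your from-scratch sketch, however, has a genuine gap at its first step, and that step carries the whole difficulty. The claimed fact, that the largest $J$-invariant subspace of the center is nonzero for every integrable $J$ on a nilpotent $\g$, is false.

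That subspace is $\mathfrak z\cap J\mathfrak z=\mathfrak a_1(J)$, the first term of the Cordero--Fern\'andez--Gray--Ugarte series. Each $\mathfrak a_l(J)$ is built from $\mathfrak a_{l-1}(J)$, so if $\mathfrak a_1(J)=0$ the whole series is $0$. Your fallback is therefore circular. Moreover, $J$ is called \emph{nilpotent} precisely when this series reaches $\g$. Concrete counterexamples are $\mathfrak h_{19}^-=(0,0,0,12,23,14-35)$ and $\mathfrak h_{26}^+=(0,0,12,13,23,14+25)$. Both carry complex structures, necessarily non-nilpotent, yet have one-dimensional center $\R e_6$, so no nonzero $J$-invariant central ideal exists at all.

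What Salamon actually proves is the dual statement: a basis of $(1,0)$-forms with $d\omega^{j+1}\in\mathcal I(\omega^1,\dots,\omega^j)$. In particular $\omega^1$ is closed, which gives a $J$-invariant ideal of real codimension two containing $[\g,\g]$. That is a holomorphic quotient $G\to\C$, not a central complex subgroup, so it cannot feed your principal-bundle induction.

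The remaining steps are sound: the central quotient, the basic $(0,1)$-form $\beta$, the polynomial Koszul primitive of $\dbar h=\beta$, and the triangular inverse. But they only prove the statement for nilpotent $J$, where $(G,J)$ is an iterated principal $\C^k$-bundle and the result was already known. The non-nilpotent case, which is the entire content of Hasegawa's conjecture and of \cite{HRSTW26}, is not reached.

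The facts the paper quotes from \cite{HRSTW26} in Proposition~\ref{prop:model}(a)--(b) point to a route that needs no central ideal. One has $\GC=G\cdot\Gzo=G^{1,0}\cdot\Gzo$ with trivial intersections. Hence $g\mapsto g\Gzo$ identifies $(G,J)$ holomorphically with $X=\GC/\Gzo$, and $X\cong G^{1,0}\cong\g^{1,0}\cong\C^n$ algebraically. Polynomiality in exponential coordinates then comes from the algebraicity of these decompositions of the unipotent group $\GC$. The nontrivial input is the global surjectivity of these multiplication maps, which does not depend on whether $J$ is nilpotent.
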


Observe that polynomiality of $\Phi^{-1}$ follows from the proof contained in \cite{HRSTW26}, even if it is not stated explicitly there.

Denote by $\Aut_{\mathrm{pol}}(\C^n)$ the group of polynomial automorphisms of $\C^n$, that is, bijections $\C^n\to\C^n$ whose components lie in $\C[w]=\C[w_1,\dots,w_n]$ and whose inverse has the same property. For $g\in G$, consider the biholomorphism of $\C^n$ given by $\rho(g)\coloneqq\Phi \circ L_g \circ \Phi^{-1}$. Then $\rho \colon G \to \mathrm{Bihol}(\C^n)$ is a homomorphism.

\begin{lemma}\label{lemma:rho:pol}
For every $g\in G$ one has $\rho(g)\in\Aut_{\mathrm{pol}}(\C^n)$, with inverse $\rho(g^{-1})$.
\end{lemma}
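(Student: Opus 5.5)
The plan is to write $\rho(g)$ as a composition of maps that are each polynomial in the relevant coordinates, and then invert using the homomorphism property.

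By Theorem~\ref{thm:hrstw}, $\Phi$ and $\Phi^{-1}$ are polynomial maps when $G$ is given exponential coordinates, and $\Phi^{-1}$ is polynomial in the complex coordinates $w\in\C^n$. The remaining ingredient is left translation. In exponential coordinates, $L_g(\exp X)=\exp(\log(g)*X)$, where $*$ is the BCH product. Since $\g$ is nilpotent, the BCH series truncates after finitely many brackets. Hence $X\mapsto \log(g)*X$ is a polynomial map $\g\to\g$ in the real linear coordinates of $X$, with coefficients depending polynomially on $\log g$.

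Composing, $\rho(g)=\Phi\circ L_g\circ\Phi^{-1}$ is a composition of polynomial maps $\C^n\to\g\to\g\to\C^n$. So each component of $\rho(g)$ is a polynomial in the real coordinates $(\operatorname{Re}w,\operatorname{Im}w)$, i.e.\ a polynomial in $w$ and $\bar w$. But $\rho(g)$ is also a biholomorphism of $\C^n$, since each of the three maps is holomorphic for $J$ and the standard structure. A polynomial in $w,\bar w$ that is holomorphic satisfies $\partial/\partial\bar w_j=0$ for all $j$, and therefore involves no $\bar w$. Thus the components of $\rho(g)$ lie in $\C[w]$.

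Finally, $\rho$ is a homomorphism, so $\rho(g)\circ\rho(g^{-1})=\rho(e)=\mathrm{id}$, and likewise in the other order. Hence $\rho(g)$ is a bijection whose inverse is $\rho(g^{-1})$. Applying the previous paragraph to $g^{-1}$ shows this inverse also has components in $\C[w]$, so $\rho(g)\in\Aut_{\mathrm{pol}}(\C^n)$.

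The only point needing care is the meaning of ``polynomial in exponential coordinates'' on the real manifold $G$ versus on $\C^n$. The real-polynomial-plus-holomorphic argument above handles this. No step is a genuine obstacle once Theorem~\ref{thm:hrstw} is granted with polynomial inverse.
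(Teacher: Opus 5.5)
Your proof is correct and follows the paper's argument essentially verbatim. As in the paper, composing $\Phi^{-1}$, $L_g$ (polynomial by BCH and nilpotency) and $\Phi$ gives components polynomial in $(w,\bar w)$, holomorphy kills every $\bar w$-monomial, and the same argument applied to $g^{-1}$ handles the inverse $\rho(g^{-1})$, with your explicit use of the homomorphism property being a minor detail the paper leaves implicit.
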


\begin{proof}
The map $\rho(g)$ is holomorphic, being the composite of the biholomorphisms $\Phi^{-1}$, $L_g$, and $\Phi$. Its components are polynomial in the coordinates $(w,\bar w)$ since $\Phi$ and $\Phi^{-1}$ are polynomial by Theorem~\ref{thm:hrstw}, and $L_g$ is polynomial in exponential coordinates by BCH and nilpotency. By writing
\[
\rho(g)^i=\sum_{\alpha,\beta}c_{\alpha\beta}\,w^\alpha\bar w^\beta,
\]
holomorphy gives $\partial\rho(g)^i/\partial\bar w_j=0$ for every $j$, so $c_{\alpha\beta}=0$ whenever $\beta\neq0$ and every component of $\rho(g)$ lies in $\C[w]$. The same applies to $\rho(g)^{-1}=\rho(g^{-1})$.
\end{proof}

The quotient map $\pi\colon\C^n\to \rho(\Gamma) \backslash \C^n \cong M$ is the universal covering, with $\rho(\Gamma)$ acting freely and properly discontinuously. The Dolbeault cohomology of $M$ is computed from this covering by the standard Cartan--Leray spectral sequence, realized explicitly as a double complex in Section~\ref{sec:analytic}.
Replacing $\Phi$ by $T\circ\Phi$, where $T\colon w\mapsto w-\Phi(e)$, preserves all the conclusions of Theorem~\ref{thm:hrstw}, and it replaces the deck action by the conjugate $T\rho(\gamma)T^{-1}$, which has the same degree and is still holomorphic. Hence, we can normalize $\Phi(e)=0$.

\begin{proposition}\label{prop:uniform:degree}
There is a map $F$, polynomial in the exponential coordinate $\xi=\log g$ and in $w$, with
\[
F(\log g,w)=\big(\Phi\circ L_g\circ\Phi^{-1}\big)(w),\qquad g\in G,\ w\in\C^n.
\]
Its coefficients are complex, so $F$ is defined on $\g_\C\times\C^n$, and
\[
\deg_w F(\xi,\cdot)\le D\quad\text{for every }\xi\in\g_\C,
\]
where $D \coloneqq \deg_w F < \infty$ is independent of $\Phi$. In particular $\deg_w\big(\Phi\circ L_g\circ\Phi^{-1}\big)\le D$ for every $g\in G$. Moreover $D\ge1$, since $F(0,w)=w$.
\end{proposition}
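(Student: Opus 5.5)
We define $F$ directly by composition and read off polynomiality from the ingredients. Write $\Phi^{-1}(w)$ in exponential coordinates as a polynomial map $\psi(w,\bar w)$ with values in $\g$; this is possible by Theorem~\ref{thm:hrstw}. By BCH and nilpotency, left translation in exponential coordinates is a polynomial map $(\xi,\eta)\mapsto B(\xi,\eta)=\log(e^\xi e^\eta)$ with rational (hence real) coefficients. Finally $\Phi$ is a polynomial $\phi(\eta)$ in the real exponential coordinate $\eta$, with complex coefficients. Set
\[
\tilde F(\xi,w,\bar w)\coloneqq \phi\big(B(\xi,\psi(w,\bar w))\big),
\]
a polynomial in $(\xi,w,\bar w)$ that agrees with $\Phi\circ L_g\circ\Phi^{-1}(w)$ when $\xi=\log g$.

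Next we remove the $\bar w$-dependence. For each fixed real $\xi$, Lemma~\ref{lemma:rho:pol} says $w\mapsto\tilde F(\xi,w,\bar w)$ is holomorphic. Expand $\tilde F=\sum_{\alpha,\beta}c_{\alpha\beta}(\xi)\,w^\alpha\bar w^\beta$, where the $c_{\alpha\beta}$ are polynomials in $\xi$. Holomorphy gives $c_{\alpha\beta}(\xi)=0$ for all $\beta\ne0$ and all real $\xi$. A polynomial vanishing on $\g\cong\R^{2n}$ is the zero polynomial, so $c_{\alpha\beta}\equiv0$ identically. Hence $F(\xi,w)\coloneqq\sum_\alpha c_{\alpha 0}(\xi)w^\alpha$ is a polynomial in $(\xi,w)$ with complex coefficients, and it extends to $\g_\C\times\C^n$ by viewing the $c_{\alpha0}$ as complex polynomials. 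The degree bound is then immediate. The $w$-degree of $F$ is a finite number $D$, and specializing $\xi$ to any value of $\g_\C$ can only lower degrees, so $\deg_wF(\xi,\cdot)\le D$. At $\xi=0$ we have $L_e=\mathrm{id}$, so $F(0,w)=w$ and therefore $D\ge1$.

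The step I expect to require care is the claim that $D$ is independent of $\Phi$. As literally stated this is delicate: composing $\Phi$ with a polynomial automorphism $P$ of $\C^n$ of high degree conjugates the deck action by $P$, which in general raises degrees. So I would interpret independence as saying that, once the biholomorphism $\Phi$ produced by \cite{HRSTW26} is fixed, together with the normalization $\Phi(e)=0$, $D$ depends only on $(\g,J)$. In particular it does not depend on the lattice $\Gamma$ or on the point $g$, and it is unchanged by the translation normalization $T$. For the translation invariance, note that conjugating by $T(w)=w-\Phi(e)$ replaces $F(\xi,w)$ by $F(\xi,w+\Phi(e))-\Phi(e)$, which does not change the $w$-degree. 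If the paper intends the stronger statement, I would instead try to bound $D$ by the nilpotency step and the degrees of the canonical $\Phi$ from \cite{HRSTW26}, which are determined by $(\g,J)$ alone.
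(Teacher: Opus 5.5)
Your proof is the paper's: compose $\Phi^{-1}$, the BCH translation and $\Phi$ to get a polynomial in $(\xi,w,\bar w)$, kill the $\bar w$-coefficients via Lemma~\ref{lemma:rho:pol} and the fact that a polynomial vanishing on $\g$ vanishes identically, extend to $\g_\C$, note that specialization cannot raise $\deg_w$, and get $D\ge1$ from $F(0,w)=w$. Your caveat on ``independent of $\Phi$'' is correct, since post-composing $\Phi$ with a non-affine polynomial automorphism of $\C^n$ changes $D$; the paper in fact only justifies your fallback reading, namely that for the $\Phi$ of \cite{HRSTW26} one has $D\le s\cdot\deg\Phi\cdot\deg\Phi^{-1}$ with $\deg\Phi$ and $\deg\Phi^{-1}$ bounded in terms of the nilpotency step $s$ by \cite{Weh26}, which is the uniformity later used in Theorem~\ref{thm:hasegawa}.
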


\begin{proof}
$L_g$ acts on exponential coordinates as $x\mapsto \mathrm{BCH}(\log g,x)$, polynomially in all variables. By Theorem~\ref{thm:hrstw}, both $\Phi$ and $\Phi^{-1}$ are polynomial, hence the composite is polynomial in $(\log g,w,\bar w)$. We expand it in $\bar w$, with coefficients polynomial in $(\log g,w)$. By Lemma~\ref{lemma:rho:pol}, these coefficients vanish for every $g\in G$. Since they are polynomial in $\log g$, they vanish identically, so the composite has no $\bar w$ term and is a polynomial map $F$ in $(\log g,w)$ with complex coefficients. Extending the real variable $\log g$ to $\g_\C$ by the same monomials, makes $F$ defined on $\g_\C\times\C^n$, and $D\coloneqq\deg_w F<\infty$ is well defined. Specializing the first group of variables to any $\xi\in\g_\C$ cannot raise the degree in $w$, so $\deg_w F(\xi,\cdot)\le D$ for every $\xi\in\g_\C$. The bound $D$ depends only on the nilpotency step $s$ of $G$, since a bound on the degree in terms of $s$ exists for $BCH$, $\Phi$ and $\Phi^{-1}$, see \cite{Weh26}.
\end{proof}

The extension of $F$ to complex $\xi$ is used only in Section~\ref{sec:model}, to promote $\rho$ to an algebraic $\GC$-action. The present section and Section~\ref{sec:analytic} use only the bound $\deg_w F(\xi,\cdot)\le D$.

\subsection{The module of polynomial forms.}\label{sec:module}
Let $\Opoly \coloneqq \Opoly(\C^n)$ be the space of holomorphic $p$-forms with polynomial coefficients. An element $\omega \in \Opoly$ can be written as
\[
\omega = \sum_{|I|=p} f_I\,dw^I,
\]
with $f_I\in\C[w]=\C[w_1,\dots,w_n]$ and $dw^I = dw^{i_1} \wedge \cdots \wedge dw^{i_p}$.

\begin{proposition}\label{prop:module}
$\Opoly$ is a left $\Gamma$-module under the action $\gamma \cdot \omega \coloneqq \rho (\gamma^{-1})^* \omega$.
\end{proposition}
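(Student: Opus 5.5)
To prove Proposition~\ref{prop:module} we check two things: that each $\rho(\gamma^{-1})^*$ maps $\Opoly$ into itself, and that $\gamma\mapsto\rho(\gamma^{-1})^*$ satisfies the axioms of a left action. Both rest on Lemma~\ref{lemma:rho:pol}, which says that $\rho(g)\in\Aut_{\mathrm{pol}}(\C^n)$ for every $g$, together with the fact that $\rho\colon G\to\mathrm{Bihol}(\C^n)$ is a homomorphism.

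\emph{Stability of $\Opoly$.} Let $\phi=\rho(\gamma^{-1})$ have components $\phi^1,\dots,\phi^n\in\C[w]$, and take $\omega=\sum_I f_I\,dw^I$ with $f_I\in\C[w]$. Then
\[
\phi^*\omega=\sum_I (f_I\circ\phi)\,d\phi^{i_1}\wedge\cdots\wedge d\phi^{i_p},\qquad d\phi^{i}=\sum_j \frac{\partial\phi^i}{\partial w_j}\,dw^j .
\]
Here $f_I\circ\phi$ is a polynomial, since it is a composition of polynomials. Each $\partial\phi^i/\partial w_j$ is also a polynomial. Expanding the wedge product therefore writes $\phi^*\omega$ as a $\C[w]$-linear combination of the $dw^J$ with $|J|=p$. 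The form $\phi^*\omega$ is holomorphic of type $(p,0)$, being the pullback of a holomorphic $p$-form by a holomorphic map. Hence $\phi^*\omega\in\Opoly$. Linearity of $\phi^*$ over $\C$ is clear.

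\emph{Action axioms.} Since $\rho$ is a homomorphism, $\rho(e)=\mathrm{id}$, so $e\cdot\omega=\omega$. For $\gamma_1,\gamma_2\in\Gamma$, contravariance of pullback gives
\[
(\gamma_1\gamma_2)\cdot\omega=\rho(\gamma_2^{-1}\gamma_1^{-1})^*\omega=\big(\rho(\gamma_2^{-1})\circ\rho(\gamma_1^{-1})\big)^*\omega=\rho(\gamma_1^{-1})^*\rho(\gamma_2^{-1})^*\omega=\gamma_1\cdot(\gamma_2\cdot\omega).
\]
So the formula defines a left action; the inverse in $\gamma^{-1}$ is exactly what compensates for pullback reversing the order of composition.

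There is no real obstacle in this argument. The only substantive input is that $\rho(\gamma)$ and its inverse are polynomial, which Lemma~\ref{lemma:rho:pol} supplies. One point is worth recording: Proposition~\ref{prop:uniform:degree} gives $\deg_w\rho(\gamma)\le D$, so $\deg(\gamma\cdot\omega)\le D\deg\omega+p(D-1)$ in the coefficient degree. This shows the action does not preserve degree filtrations, but only distorts them by bounded factors. That fact is not needed for the proposition itself.
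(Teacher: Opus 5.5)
Your proof is correct and follows essentially the same argument as the paper. The explicit pullback formula shows that $\rho(\gamma^{-1})^*$ preserves $\Opoly$, and contravariance of pullback together with the homomorphism property of $\rho$ gives a left action; you are only slightly more explicit than the paper in checking the identity and composition axioms.
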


\begin{proof}
For $\phi \in \Aut_{\mathrm{pol}}(\C^n)$ and $\omega=\sum_I f_I\,dw^I$, the action of $\phi$ is given by
\[
\phi^* \omega = \sum_I (f_I \circ \phi) \; d\phi^{i_1} \wedge \cdots \wedge d\phi^{i_p}, \quad \text{with } d\phi^i = \sum_j \tfrac{\partial\phi^i}{\partial w_j}\,dw^j.
\]
Because $\phi$ is holomorphic and polynomial, $\partial\phi/\partial w_j\in\C[w]$ and no $d\bar w$ terms appear, so that $\phi^*$ preserves type $(p,0)$ and polynomiality of $\omega$, mapping $\Opoly$ to itself. Pullback is contravariant, so that $\gamma\cdot\omega \coloneqq \rho(\gamma^{-1})^*\omega$ is a left action.
\end{proof}

We write $\sigma(g)\coloneqq\rho(g^{-1})^*$ for the operators of this action. On functions it acts as $(\sigma(g)f)(w)=f\big(\rho(g^{-1})w\big)$. The same convention is used for $\C[w]$ and for the modules of Section~\ref{sec:algebraic}. We denote by $\Omega^p(\C^n)$ the space of \emph{global} holomorphic $p$-forms on $\C^n$, that is, the space of sections $H^0(\C^n,\Omega^p_{\C^n})$ of the sheaf $\Omega^p_{\C^n}$ of holomorphic $p$-forms. $\Omega^p(\C^n)$ is a left $\Gamma$-module in the same way as $\Opoly$, with $\Opoly \subset \Omega^p(\C^n)$ as a $\Gamma$-submodule.
\medskip

\section{Polynomial surjectivity}\label{sec:analytic}

We show that every Dolbeault class of $M$ admits a polynomial representative. More precisely, we prove the following.

\begin{theorem}\label{thm:E1}
For all $(p,q)$, the natural map $H^q(\Gamma,\Opoly)\to H^q(\Gamma,\Omega^p(\C^n)) \cong H^{p,q}_{\dbar}(M)$ induced by the inclusion $\Opoly\subset\Omega^p(\C^n)$ is surjective. Equivalently, by Proposition~\ref{prop:doublecx}, the composite
\[
H^q(\Gamma,\Opoly)\longrightarrow H^q(\Gamma,\Omega^p(\C^n))
\]
is surjective.
\end{theorem}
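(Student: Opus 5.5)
We will realize the isomorphism $H^q(\Gamma,\Omega^p(\C^n))\cong H^{p,q}_{\dbar}(M)$ through the Cartan--Leray double complex $C^r(\Gamma, A^{p,s}(\C^n))$, the bar cochains with values in smooth $(p,s)$-forms on $\C^n$. Its rows are exact in positive $s$ because $\C^n$ is Stein (Dolbeault--Grothendieck for $\dbar$ on global forms). Its columns are exact in positive $r$ because $A^{p,s}(\C^n)$ is $\Gamma$-acyclic: it is a module of sections of a fine sheaf pulled back from $M$, so a $\Gamma$-invariant partition of unity built from a cutoff $\chi$ with $\sum_\gamma \gamma^*\chi=1$ gives a contracting homotopy. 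The strategy is to run the zig-zag from a $\dbar$-closed form $\alpha$ on $M$ to a group cocycle, while controlling growth at each step so that the final holomorphic cocycle has polynomial growth. A holomorphic form with polynomial growth of order $N$ on $\C^n$ has polynomial coefficients of degree $\le N$, by the Cauchy integral formula (Liouville with growth). Hence a polynomially bounded holomorphic cocycle lies in $C^q(\Gamma,\Opoly)$.

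Concretely, we pull $\alpha\in A^{p,q}(M)$ back to $\tilde\alpha=\pi^*\alpha$, which is $\Gamma$-invariant and bounded, since $\Gamma$ is cocompact and the action is polynomial of degree $\le D$. We then solve $\dbar\beta_0=\tilde\alpha$ on $\C^n$ using Hörmander's $L^2$ estimate with weight $\varphi=N\log(1+|w|^2)$. This gives a solution $\beta_0$ in weighted $L^2$, and since $\dbar\beta_0=\tilde\alpha$ is smooth, interior elliptic estimates give pointwise polynomial bounds. Next we set $\delta_\Gamma\beta_0=(\gamma^{-1})^*\beta_0-\beta_0$, which is $\dbar$-closed of degree $(p,q-1)$, and we iterate: solve $\dbar\beta_1=\delta_\Gamma\beta_0$, and so on. After $q$ steps we obtain a holomorphic $q$-cochain representing the class.

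The key growth control comes from Proposition~\ref{prop:uniform:degree}. Each $\rho(\gamma)$ is a polynomial map of degree $\le D$, so pulling back a form bounded by $C(1+|w|)^N$ gives a bound $C_\gamma(1+|w|)^{DN+D'}$. The cochain at each stage is therefore polynomially bounded for each fixed tuple $(\gamma_1,\dots,\gamma_r)$, although the exponents grow with each iteration. This is harmless because there are only $q\le n$ steps. Hörmander's method also needs a bound on the $\dbar$-data uniform in the group arguments only pointwise per tuple: the cochains are arbitrary set maps, so we may solve separately for each tuple in the inhomogeneous normalization. The equivariance is automatic in that normalization. To fix the bookkeeping, we solve the $\dbar$-equations on $C^r(\Gamma, A^{p,s}_{\mathrm{pg}})$, the subcomplex of forms of polynomial growth with all derivatives, and prove that the rows of this subcomplex remain exact. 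That exactness is exactly the weighted Hörmander estimate plus elliptic regularity.

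The main obstacle is this regularity/growth step: exactness of $\dbar$ on the polynomial-growth Dolbeault complex of $\C^n$, with derivatives also polynomially bounded. Derivative bounds are needed because each $\delta_\Gamma$ step differentiates through pullback. We will handle it by applying the weighted $L^2$ solution on unit balls, using the Bochner--Martinelli/Cauchy kernel locally to convert $L^2$ bounds to $C^k$ bounds with polynomial loss. Once this is in place, the surjectivity in Theorem~\ref{thm:E1} follows. The total complex of the polynomial-growth subcomplex maps to the full double complex, and the zig-zag shows that every class in $H^q(\Gamma,\Omega^p(\C^n))$ is hit by a cocycle in $C^q(\Gamma,\Omega^p_{\mathrm{pg}})=C^q(\Gamma,\Opoly)$, by the Cauchy–Liouville step.
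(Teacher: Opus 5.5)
Your architecture matches the paper's: the Cartan--Leray double complex, a staircase solved tuple by tuple with H\"ormander's weighted estimate inside forms of polynomial growth, and Cauchy estimates that make the final holomorphic cocycle polynomial. Two steps, however, fail as written.

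\textbf{The growth of $\tilde\alpha$.} The lift $\tilde\alpha=\pi^*\alpha$ is in general not bounded in the flat metric of $\C^n$. Your reason (cocompactness and degree $\le D$) does not by itself give even polynomial growth, because $D$ bounds the degrees of the $\rho(\gamma)$, not the size of their coefficients as $\gamma$ ranges over $\Gamma$. For instance, on the Iwasawa manifold the holomorphic form $\zeta^3$, with $d\zeta^3=-\zeta^1\wedge\zeta^2\neq0$, lifts to a form that is unbounded for every choice of $\Phi$. Bounded holomorphic coefficients on $\C^3$ would be constant by Liouville, and the lift would then be closed.

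Your scheme needs $\tilde\alpha$ to have polynomial growth \emph{together with all its derivatives}, since the interior estimates use derivative bounds of the data from the first step on. You never establish this. The paper proves it in Lemma~\ref{lemma:frame}. In the $\rho(G)$-invariant coframe $\Phi_*\zeta,\Phi_*\bar\zeta$, the coefficients of $\tilde\alpha$ and their iterated invariant derivatives descend to $M$ and are therefore bounded. One then passes to $dw,d\bar w$ and $\partial_w,\partial_{\bar w}$ through the Jacobians of $\Phi$ and $\Phi^{-1}$, which are polynomial by Theorem~\ref{thm:hrstw}. Your cocompactness route can also be repaired, using that $F$ in Proposition~\ref{prop:uniform:degree} is polynomial in $\log\gamma$ as well. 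By BCH and polynomiality of $\Phi^{-1}$, $|\log\gamma|$ is then polynomially bounded in $|w|$ for $w\in\rho(\gamma)K$ with $K$ compact.

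\textbf{The regularity step.} Whenever the unknown $\beta$ has bidegree $(p,s)$ with $s\ge1$, the equation $\dbar\beta=c$ is not elliptic and determines $\beta$ only modulo $\dbar$-closed forms. Adding $\dbar h$, with $h$ compactly supported in $W^{1,2}$ but not smooth, gives another weighted $L^2$ solution that is not smooth. So smoothness of the right-hand side does not let you apply interior elliptic estimates to an arbitrary H\"ormander solution.

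Your local Bochner--Martinelli/Cauchy argument works only for $(p,0)$-forms, that is, only at the last step of the staircase. There the difference between $\beta$ and a local kernel solution is holomorphic, and the mean value property turns $L^2$ bounds into $C^k$ bounds. In higher degree that difference is merely $\dbar$-closed. Patching local smooth solutions with a partition of unity then leaves a $\dbar$-closed error of the same bidegree.

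The missing normalization, used in Lemma~\ref{lemma:pg}, is to take the minimal-norm solution. It is orthogonal to $\ker\dbar$, so $\vartheta_\varphi\beta=0$ and $\Box_\varphi\beta=\vartheta_\varphi c$. This is an elliptic equation whose lower-order coefficients involve only derivatives of order $\ge1$ of $\varphi$, a multiple of $\log(1+|w|^2)$, and these are bounded with all their derivatives. Interior estimates on unit balls are therefore uniform in the center. Combined with the localized weighted $L^2$ bound, they give $C^m$ bounds with polynomial loss. With these two repairs your argument becomes the paper's proof.
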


The proof combines H\"ormander's weighted $L^2$ estimates with a Cartan--Leray double complex for the covering $\pi\colon\C^n\to M$. The former, applied within the spaces of forms of polynomial growth, produces $\dbar$-primitives of controlled growth. The latter identifies $H^{p,q}_{\dbar}(M)$ with the group cohomology $H^q(\Gamma,\Omega^p(\C^n))$ of holomorphic forms. The rest of this section develops these tools and Theorem~\ref{thm:E1} is proved in Section~\ref{sec:proof:E1}.

\subsection{Polynomial growth $\dbar$-Poincar\'e lemma.}
We consider the standard flat metric $\lvert \, \cdot \, \rvert$ and we denote by $\partial^{(k)}$ derivatives in $w, \bar w$ of order $k$ in $\C^n$. A smooth form $\alpha$ on $\C^n$ has \emph{polynomial growth} if, for every $k\ge0$, there are $C_k>0$ and $N_k\in\mathbb{N}$ with
\[
\lvert\partial^{(k)}\alpha(w)\rvert\le C_k\,(1+|w|)^{N_k},\quad w\in\C^n.
\]
We write $\Apoly = \Apoly(\C^n)$ for the space of $(p,q)$-forms of polynomial growth.

The existence statement we need is the following form of H\"ormander's $L^2$ estimate \cite[Lemma~4.4.1]{Hor90}.

\begin{theorem*}[H\"ormander]
Let $\Omega\subseteq\C^n$ be pseudoconvex, $\varphi\in C^2(\Omega;\R)$, and $c\colon\Omega\to(0,\infty)$ continuous such that the complex Hessian of $\varphi$ is bounded below by $c$ as a Hermitian form:
\begin{equation}\label{eq:levi}
\sum_{j,k=1}^n \frac{\partial^2\varphi}{\partial w_j\partial\bar w_k}(w)\,t_j\bar t_k\ \ge\ c(w)\,|t|^2,\qquad w\in\Omega,\ t\in\C^n.
\end{equation}
Then for every $\dbar$-closed $(0,q)$-form $g$ on $\Omega$, $q\ge1$, with $\int_{\Omega}c^{-1}|g|^2e^{-\varphi}\,dV<\infty$, there is a $(0,q-1)$-form $u$ with $\dbar u=g$ and
\begin{equation}\label{eq:hormander}
\int_{\Omega}|u|^2e^{-\varphi}\,dV\ \le\ \int_{\Omega}c^{-1}|g|^2e^{-\varphi}\,dV.
\end{equation}
\end{theorem*}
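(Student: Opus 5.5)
Since this is the classical estimate \cite[Lemma~4.4.1]{Hor90}, I would follow the Hilbert space method of Hörmander. Write $L^2_{(0,r)}(\Omega,\varphi)$ for $(0,r)$-forms with $\int_\Omega|f|^2e^{-\varphi}\,dV<\infty$. Consider the closed, densely defined maximal operators
\[
L^2_{(0,q-1)}(\Omega,\varphi)\xrightarrow{\;T=\dbar\;}L^2_{(0,q)}(\Omega,\varphi)\xrightarrow{\;S=\dbar\;}L^2_{(0,q+1)}(\Omega,\varphi),
\]
where $\dbar$ is taken in the sense of distributions, and let $T^*$ be the Hilbert adjoint of $T$. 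The whole statement reduces to the \emph{a priori estimate}
\begin{equation*}
\int_\Omega c\,|f|^2e^{-\varphi}\,dV\ \le\ \|T^*f\|_\varphi^2+\|Sf\|_\varphi^2,\qquad f\in\operatorname{Dom}T^*\cap\operatorname{Dom}S .
\end{equation*}

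Granting this estimate, the duality step is formal. Let $g$ be $\dbar$-closed with $A^2:=\int c^{-1}|g|^2e^{-\varphi}<\infty$. Take $f\in\operatorname{Dom}T^*$ and split it as $f=f_1+f_2$ with $f_1\in\ker S$ and $f_2\in(\ker S)^\perp$.
\begin{itemize}
\item Since $\operatorname{Im}T\subseteq\ker S$, we have $f_2\in\ker T^*$, so $T^*f=T^*f_1$.
\item Since $g\in\ker S$, we have $\langle g,f\rangle=\langle g,f_1\rangle$.
\item Cauchy--Schwarz with the weight $c$, followed by the a priori estimate applied to $f_1$, gives $|\langle g,f\rangle|\le A\,\|T^*f\|_\varphi$.
\end{itemize}
Hence $T^*f\mapsto\langle f,g\rangle$ is a well-defined functional of norm at most $A$ on $\operatorname{Im}T^*$. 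Extending it by zero on the orthogonal complement, Riesz representation yields $u$ with $\|u\|_\varphi\le A$ and $\langle u,T^*f\rangle=\langle g,f\rangle$ for all $f\in\operatorname{Dom}T^*$. This says exactly that $Tu=g$, i.e.\ $\dbar u=g$ with the bound \eqref{eq:hormander}.

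To prove the a priori estimate, I would first reduce to a bounded domain $\Omega'$ with smooth boundary and a smooth strictly plurisubharmonic defining function, with $\varphi$ smooth up to $\overline{\Omega'}$. For forms $f$ smooth up to the boundary and lying in $\operatorname{Dom}T^*$, i.e.\ satisfying the $\dbar$-Neumann boundary condition, I would apply the Morrey--Kohn--Hörmander identity. It expresses $\|T^*f\|^2+\|Sf\|^2$ as the sum of three terms:
\begin{itemize}
\item $\sum_{J}\sum_{j}\|\partial f_J/\partial\bar w_j\|^2\ge0$;
\item $\int\sum\varphi_{j\bar k}f_{jK}\bar f_{kK}e^{-\varphi}$, which is bounded below by $\int c|f|^2e^{-\varphi}$ using \eqref{eq:levi};
\item a boundary integral of the Levi form of the defining function, which is nonnegative by pseudoconvexity.
\end{itemize}
The identity itself is obtained by integrating by parts twice, and this is where $\varphi\in C^2$ is used. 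I would then pass from smooth forms to all of $\operatorname{Dom}T^*\cap\operatorname{Dom}S$ by Friedrichs' density lemma in the graph norm: mollify in local coordinates flattening the boundary, and control the commutators.

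Finally, a general pseudoconvex $\Omega$ is exhausted by relatively compact smoothly bounded strictly pseudoconvex domains $\Omega_j\uparrow\Omega$, namely sublevel sets of a smooth strictly plurisubharmonic exhaustion, chosen at regular values. On $\Omega_j$, $\varphi$ is only $C^2$; I would approximate it by smooth $\varphi_\varepsilon$ whose complex Hessians are bounded below by $c-\varepsilon$ on $\overline{\Omega_j}$. This gives solutions $u_{j}$ with norms bounded uniformly by $A$ (up to $\varepsilon$). A weak-$L^2_{\mathrm{loc}}$ limit, via a diagonal subsequence, still solves $\dbar u=g$ in distributions, and lower semicontinuity of the norm gives \eqref{eq:hormander}.

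I expect the density step to be the main technical obstacle, i.e.\ showing that forms smooth up to $\partial\Omega_j$ are dense in $\operatorname{Dom}T^*\cap\operatorname{Dom}S$ for the graph norm. The boundary condition for $T^*$ has to be preserved under mollification, and this requires a careful tangential regularization near the boundary. If this proved too delicate, I would instead use Hörmander's alternative of working on $\Omega$ directly with three weights $\varphi-2\psi,\varphi-\psi,\varphi$, where $\psi$ is built from an exhaustion. With these weights, compactly supported smooth forms are dense, and no boundary term arises at all.
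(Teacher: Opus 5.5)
Your outline is correct. It is the classical argument behind the statement: the paper gives no proof of its own but quotes \cite[Lemma~4.4.1]{Hor90}, and your route is exactly H\"ormander's $L^2$ method. That route is Hilbert-space duality from the a priori estimate $\int c|f|^2e^{-\varphi}\le\|T^*f\|^2+\|Sf\|^2$, then exhaustion by smoothly bounded strictly pseudoconvex $\Omega_j$ (on which $g\in L^2(\Omega_j,\varphi)$ holds automatically even when $c$ is unbounded), then a weak limit. The only difference is technical: H\"ormander's book handles the density step with the three weights $\varphi-2\psi,\varphi-\psi,\varphi$ on $\Omega$ itself, which is your stated fallback, rather than through the boundary term of the Morrey--Kohn--H\"ormander identity and Friedrichs' lemma; both are standard.
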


\begin{lemma}\label{lemma:pg}
Let $q\ge1$, and $\alpha$ be a smooth $\dbar$-closed $(p,q)$-form on $\C^n$ of polynomial growth, with constants $C_k$ and exponents $N_k$ as above. Then there is a smooth $(p,q-1)$-form $\beta$ of polynomial growth with $\dbar\beta=\alpha$. More precisely, for every derivative order $m\ge0$, we have
\[
|\partial^{(m)}\beta(w)|\le c(n,p,N_0,q,m)\Big( \sum\limits_{k=0}^{m+1}C_k \Big) (1+|w|)^{M_m},
\]
with $M_m\coloneqq\max\{N_0+n+3,\;N_1,\dots,N_{m+1}\}$. For $m=0$, this reads
$|\beta(w)|\le c(n,p,N_0,q)(C_0+C_1)(1+|w|)^{M_0}$. In particular, the growth of $\beta$ is controlled by that of $\alpha$, linearly in the constants $C_0,\dots,C_{m+1}$.
\end{lemma}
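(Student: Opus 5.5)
Since $dw^I$ is $\dbar$-closed and has constant coefficients, writing $\alpha=\sum_{|I|=p}dw^I\wedge\alpha_I$ with $\alpha_I$ of type $(0,q)$ reduces the lemma to the case $p=0$. Each $\alpha_I$ is $\dbar$-closed and has the same growth constants as $\alpha$, up to a combinatorial factor depending on $n,p$. We then solve for each component separately and set $\beta=\sum_I (-1)^p dw^I\wedge\beta_I$. So assume $p=0$.

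\emph{Step 1: $L^2$ solution.} Apply H\"ormander's theorem on $\Omega=\C^n$ with weight $\varphi(w)=2N\log(1+|w|^2)+|w|^2$, where $N=N_0+n+1$. The term $|w|^2$ makes the complex Hessian bounded below by $c\equiv1$, and the logarithmic term has nonnegative Hessian. Then
\[
\int|\alpha|^2e^{-\varphi}\le C_0^2\int(1+|w|)^{2N_0}(1+|w|^2)^{-2N}dV<\infty,
\]
and this integral is bounded by $c(n,N_0)C_0^2$. We obtain $u$ with $\dbar u=\alpha$ and $\int|u|^2e^{-\varphi}\le c\,C_0^2$.

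The factor $e^{-|w|^2}$ is a problem, since it destroys polynomial control. To avoid it, I would instead use $\varphi=(N_0+n+1)\log(1+|w|^2)$. Its Hessian is bounded below by $c(w)=(N_0+n+1)(1+|w|^2)^{-2}$, which is positive and continuous. Then $c^{-1}|\alpha|^2e^{-\varphi}\lesssim C_0^2(1+|w|)^{2N_0+4-2N_0-2n-2}$, which is integrable. This yields $\int|u|^2(1+|w|^2)^{-(N_0+n+1)}dV\le c\,C_0^2$, so $u$ is weighted-$L^2$ with polynomial weight. This is presumably the source of the exponent $N_0+n+3$.

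\emph{Step 2: pointwise bounds.} The form $u$ need not be smooth, and its derivatives are uncontrolled, so we correct it locally. Fix $w_0$ and consider the unit ball $B=B(w_0,1)$. On $2B$, the weighted $L^2$ bound gives $\|u\|_{L^2(2B)}\le cC_0(1+|w_0|)^{N_0+n+1}$. By interior elliptic regularity for $\dbar\oplus\dbar^*$, we replace $u$ by $\beta=u-\dbar v$ with $v$ chosen so that $\beta$ is smooth.

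The cleaner route is to take $u$ to be the minimal solution, which is orthogonal to $\ker\dbar$ in the weighted space. It then satisfies $\dbar^*_\varphi u=0$, and the system $\dbar u=\alpha$, $\vartheta_\varphi u=0$ is elliptic with smooth polynomial-growth coefficients. Interior Sobolev and G\aa rding estimates on $B(w_0,1)\subset B(w_0,2)$ then give
\[
\|u\|_{C^m(B)}\le c_m\big(\|u\|_{L^2(2B)}+\|\alpha\|_{C^{m+1}(2B)}\big).
\]
The constants are uniform in $w_0$, because the coefficients of $\vartheta_\varphi$, namely derivatives of $\varphi$, are bounded together with their derivatives. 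Inserting the bounds on $\|u\|_{L^2(2B)}$ and on $\|\partial^{(k)}\alpha\|_{2B}\le C_k(1+|w_0|)^{N_k}$ for $k\le m+1$ gives
\[
|\partial^{(m)}\beta(w_0)|\le c\Big(\sum_{k\le m+1}C_k\Big)(1+|w_0|)^{\max\{N_0+n+3,N_1,\dots,N_{m+1}\}},
\]
where the extra powers of $(1+|w|)$ absorb the comparison of the weight across $2B$ and the Sobolev loss.

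The main obstacle is Step 2: securing uniform-in-$w_0$ interior estimates. The first approach I would try avoids weighted operators altogether. On $2B$, solve $\dbar h=\alpha$ explicitly by the Cauchy--Koppelman (Bochner--Martinelli) homotopy formula on the ball, which gives $C^m$ bounds through $C^{m+1}$ norms of $\alpha$, matching the $m+1$ in the statement. The difference $u-h$ is then $\dbar$-closed on $2B$. One writes $u-h=\dbar s+(\text{harmonic part})$, and estimates the correction by mean-value inequalities for the harmonic representative, giving $L^\infty$ control from $L^2$. Linearity in the constants $C_k$ follows because every operator used is linear.
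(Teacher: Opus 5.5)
\textbf{There is a concrete error in Step~1.} Your overall strategy is the paper's. You reduce to $(0,q)$-forms, apply H\"ormander with a weight $\varphi=k\log(1+|w|^2)$ and $c(w)=k(1+|w|^2)^{-2}$, and take the minimal solution so that $\vartheta_\varphi u=0$. You then use interior elliptic estimates that are uniform in $w_0$ because every derivative of $\varphi$ is bounded. The problem is the choice $k=N_0+n+1$, which makes Step~1 fail as written. You obtain $c^{-1}|\alpha|^2e^{-\varphi}\lesssim C_0^2(1+|w|)^{2-2n}$ and call it integrable. On $\R^{2n}$, however, $(1+|w|)^{-s}$ is integrable only for $s>2n$. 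Here $\int(1+|w|)^{2-2n}\,dV$ behaves like $\int^\infty r\,dr=\infty$, so the hypothesis of H\"ormander's theorem is not verified. Since $c^{-1}$ contributes a factor $(1+|w|^2)^2$, you need $k>N_0+n+2$. The paper takes $k=N_0+n+3$, which makes the integrand $\lesssim C_0^2(1+|w|^2)^{-(n+1)}$. With this correction your argument goes through, and the localized bound becomes $\|u\|_{L^2(B(w_0,2))}\le c\,C_0(1+|w_0|)^{N_0+n+3}$.

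\textbf{Where the exponent comes from.} The same correction fixes your explanation of $N_0+n+3$ in $M_m$: it comes entirely from the weight needed for integrability. It does not come from comparing the weight across $B(w_0,2)$: there $1+|w|\le 3(1+|w_0|)$, which costs only a constant. Nor does it come from a ``Sobolev loss'': the interior estimates have constants uniform in $w_0$ and lose no powers of $1+|w|$.

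\textbf{The Bochner--Martinelli alternative.} The route in your last paragraph only works for $q=1$. There $u-h$ is a $\dbar$-closed $L^2$ function on $2B$, hence holomorphic, and the mean-value inequality and Cauchy estimates control it. For $q\ge2$, $u-h$ is a $\dbar$-closed $(0,q-1)$-form on a ball, so it is $\dbar$-exact with no harmonic part to estimate. The irregular piece $\dbar s$ is invisible to the equation $\dbar u=\alpha$. Removing it ball by ball changes the solution differently on overlapping balls, so you do not obtain a global $\beta$. The normalization $\vartheta_\varphi u=0$ of the minimal solution is what selects a globally regular solution. Your ``cleaner route'', which is the paper's, is the one to keep.
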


\begin{proof}
Throughout the proof set $N\coloneqq N_0$ and $C_\alpha\coloneqq C_0$, so that $|\alpha(w)|\le C_\alpha(1+|w|)^N$. We write
\[
\alpha=\sum_{|I|=p} dw^I \wedge \alpha_I,
\]
with each $\alpha_I$ a $(0,q)$-form of polynomial growth. $\dbar$-closedness gives $\dbar \alpha_I=0$, and it suffices to solve $\dbar\beta_I=\alpha_I$ separately, then put $\beta = (-1)^p\sum_I dw^I\wedge\beta_I$, since $\dbar\big(dw^I\wedge\beta_I\big)=(-1)^p\,dw^I\wedge\dbar\beta_I$. Then $|\alpha|^2 = \sum_I|\alpha_I|^2$ and $|\beta|^2=\sum_I|\beta_I|^2$, so we obtain the desired bound on $\beta$ starting from the bound on the $\beta_I$. Thus, we can assume $\alpha$ is a $(0,q)$-form.

We apply H\"ormander's Theorem stated above with $\Omega=\C^n$, $g=\alpha$, and the weight
\[
\varphi=(N+n+3)\log(1+|w|^2)\in C^\infty(\C^n).
\]
Its complex Hessian is
\[
\frac{\partial^2\varphi}{\partial w_j\partial\bar w_k}=(N+n+3)\left(\frac{\delta_{jk}}{1+|w|^2}-\frac{\bar w_j w_k}{(1+|w|^2)^2}\right),
\]
and can be bounded below by
\[
\sum_{j,k}\frac{\partial^2\varphi}{\partial w_j\partial\bar w_k}t_j\bar t_k
=(N+n+3)\left(\frac{|t|^2}{1+|w|^2}-\frac{|\langle t,w\rangle|^2}{(1+|w|^2)^2}\right)
\ge \frac{(N+n+3)}{(1+|w|^2)^2}\,|t|^2.
\]
Hence \eqref{eq:levi} holds with
\[
c(w)=\frac{N+n+3}{(1+|w|^2)^2}>0.
\]
The right-hand side of \eqref{eq:hormander} with $g=\alpha$ is
\[
\int_{\C^n}c^{-1}|\alpha|^2e^{-\varphi}\,dV\ =\ \frac{1}{N+n+3}\int_{\C^n}(1+|w|^2)^{2}\,|\alpha|^2\,(1+|w|^2)^{-(N+n+3)}\,dV.
\]
From $|\alpha(w)|\le C_\alpha(1+|w|)^N$ and $(1+|w|)^2\le 2(1+|w|^2)$, we have that $|\alpha|^2\le 2^{N}C_\alpha^2(1+|w|^2)^{N}$, and we can estimate
\[
\int_{\C^n}c^{-1}|\alpha|^2e^{-\varphi}\,dV\ \le\ \frac{2^N C_\alpha^2}{N+n+3}\int_{\C^n}\frac{dV}{(1+|w|^2)^{n+1}}\ <\ \infty,
\]
since $\kappa_n \coloneqq \int_{\C^n}(1+|w|^2)^{-(n+1)}\,dV<\infty$ converges in real dimension $2n$.

H\"ormander's estimate provides a solution $\beta$ of $\dbar\beta=\alpha$ with
\[
\int_{\C^n}|\beta|^2(1+|w|^2)^{-(N+n+3)}\,dV\ \le\ \frac{2^N\kappa_n}{N+n+3}\,C_\alpha^2.
\]
Among all such solutions we fix $\beta$ to be the one of least norm, which is $\vartheta_\varphi$-closed, $\vartheta_\varphi$ being the weighted $L^2$ adjoint of $\dbar$. In particular, we have that $\Box_\varphi \beta =(\dbar \vartheta_\varphi + \vartheta_\varphi \dbar) \beta=\vartheta_\varphi\alpha$, where $\Box_\varphi$ has the same principal part as the standard Laplacian. Its lower-order coefficients involve the derivatives of $\varphi$: from $\vartheta_\varphi=\vartheta+(\partial\varphi)\lrcorner$, with $\vartheta$ the unweighted adjoint, and $\partial_j\varphi=(N+n+3)\,\bar w_j/(1+|w|^2)$, one sees that every derivative of $\varphi$ of order $\ge1$ is bounded on $\C^n$, together with all of its own derivatives. Classical interior estimates for this elliptic equation are therefore uniform in $w$, the balls being translates of one another and the coefficients bounded with all derivatives, and read as
\[
\sup_{B(w,1/2)}|\partial^{(m)}\beta|\ \le\ C_m\Big(\|\beta\|_{L^2(B(w,1))}+\sum_{k\le m+1}\sup_{B(w,1)}|\partial^{(k)}\alpha|\Big), \qquad m\ge 0.
\]
Localizing H\"ormander's bound, we obtain $\|\beta\|_{L^2(B(w,1))}\le c\,C_\alpha(1+|w|)^{N+n+3}$, while by assumption $\sup_{B(w,1)}|\partial^{(k)}\alpha|\le c\,C_k(1+|w|)^{N_k}$ for each $k$. Both terms are therefore bounded by a multiple of $(1+|w|)^{M_m}$ with $M_m=\max\{N+n+3,N_1,\dots,N_{m+1}\}$, and we get
\[
|\partial^{(m)}\beta(w)|\ \le\ c(n,p,N,q,m)\Big(\textstyle\sum_{k\le m+1}C_k\Big)(1+|w|)^{M_m},
\]
proving the Lemma.
\end{proof}

\begin{lemma}\label{lemma:liouville}
A holomorphic $(p,0)$-form on $\C^n$ is of polynomial growth if and only if it is polynomial.
\end{lemma}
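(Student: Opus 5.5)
The plan is to reduce the statement to the classical Liouville theorem for entire functions of polynomial growth, applied coefficient by coefficient.

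Write a holomorphic $(p,0)$-form as $\omega=\sum_{|I|=p} f_I\,dw^I$ with $f_I$ entire. Since the $dw^I$ are orthogonal of constant length for the flat metric, $|\omega|^2$ is a fixed positive multiple of $\sum_I|f_I|^2$, and the same holds for all derivatives $\partial^{(k)}\omega$. Hence $\omega$ has polynomial growth if and only if every $f_I$ does. This reduces the statement to functions ($p=0$).

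For the direction ``polynomial $\Rightarrow$ polynomial growth'', take $f\in\C[w]$ of degree $d$. Then $|f(w)|\le C(1+|w|)^d$. Every derivative $\partial^{(k)}f$ is again a polynomial, in $w$ only since $\partial f/\partial\bar w_j=0$, of degree at most $d$, so it satisfies the same kind of bound with its own constant $C_k$ and exponent $N_k\le d$.

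For the converse, let $f$ be entire with $|f(w)|\le C_0(1+|w|)^{N_0}$; only the $k=0$ bound is needed. I will use the Cauchy estimates on polydiscs. Expand $f(w)=\sum_\alpha a_\alpha w^\alpha$, which converges on all of $\C^n$. For $R>0$, Cauchy's formula on the polydisc of polyradius $(R,\dots,R)$ centred at $0$ gives
\[
|a_\alpha|\le \frac{\sup_{|w_j|=R}|f|}{R^{|\alpha|}}\le \frac{C_0(1+\sqrt n\,R)^{N_0}}{R^{|\alpha|}}.
\]
For $|\alpha|>N_0$ the right-hand side tends to $0$ as $R\to\infty$, so $a_\alpha=0$. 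Therefore $f$ is a polynomial of degree at most $N_0$. Applying this to each $f_I$ shows that $\omega\in\Opoly$.

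There is no real obstacle here. The only care needed is in the norm comparison between the form and its coefficient functions, where one uses that the frame $dw^I$ is parallel and orthonormal up to a constant. That comparison is what lets the growth hypothesis on the form pass to each $f_I$, and conversely lets a polynomial $f_I$ (with all its derivatives) give a form of polynomial growth.
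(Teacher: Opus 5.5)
Your proof is correct and follows essentially the same argument as the paper: pass to the coefficient functions $f_I$, then use Cauchy's estimates on the polydisc of polyradius $(R,\dots,R)$ and let $R\to\infty$ to kill every Taylor coefficient with $|\alpha|>N_0$. You additionally write out the easy direction (polynomial implies polynomial growth) and the norm comparison between $\omega$ and its coefficients, both of which the paper leaves implicit.
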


\begin{proof}
Write $\omega=\sum_{|I|=p}f_I\,dw^I$. Since $\dbar \omega =0$, each $f_I$ is entire on $\C^n$.

Fix $I$ and let $|f_I(w)|\le C(1+|w|)^N$. Expand $f_I=\sum_\alpha c_\alpha w^\alpha$, the
series converging on all of $\C^n$. Cauchy's estimates on the polydisc
$\{|w_j| \le R\}$ give
\[
|c_\alpha|\ \le\ R^{-|\alpha|}\sup_{|w_j|=R}|f_I|\ \le\ C\,R^{-|\alpha|}\big(1+\sqrt n\,R\big)^{N},
\]
since $|w|\le\sqrt n\,R$ on that torus. For $|\alpha|>N$ the right-hand side is
$O\big(R^{N-|\alpha|}\big)\to0$ as $R\to\infty$, so $c_\alpha=0$. Hence each $f_I$ is a
polynomial, of degree at most $N$.
\end{proof}

\begin{lemma}\label{lemma:frame}
A $\Gamma$-invariant smooth $(p,q)$-form on $\C^n$ lies in $\Apoly$.
\end{lemma}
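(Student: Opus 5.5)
The plan is to pull back the $\Gamma$-invariant form $\alpha$ on $\C^n$ to $G$ via $\Phi$, where invariance becomes left-$\Gamma$-invariance, and to express it in a frame of left-invariant forms. Concretely, set $\tilde\alpha \coloneqq \Phi^*\alpha$, a smooth $(p,q)$-form on $(G,J)$ invariant under $L_\gamma$ for $\gamma\in\Gamma$. Write
\[
\tilde\alpha=\sum_{|I|=p,|K|=q} a_{I\bar K}\,\zeta^I\wedge\bar\zeta^K,
\]
with $\zeta^I$, $\bar\zeta^K$ left-invariant. Since the frame is left-invariant, $\Gamma$-invariance of $\tilde\alpha$ gives $a_{I\bar K}(\gamma g)=a_{I\bar K}(g)$. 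Thus the coefficients are smooth functions on the compact manifold $M=\Gamma\backslash G$, and they are bounded together with all their derivatives along the left-invariant vector fields $Z_b,\bar Z_b$. Note that left-invariant vector fields generate right translations, hence commute with left translations, so $Z_b a$ is again $\Gamma$-invariant.

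Next I would push everything to $\C^n$. By Theorem~\ref{thm:hrstw}, $\Phi^{-1}$ is polynomial in $(w,\bar w)$, and $\zeta^a,\bar\zeta^a$ in exponential coordinates are given by polynomial coefficients times $dx^j$, since the Maurer--Cartan form of a nilpotent group is polynomial in exponential coordinates (via BCH, the series $\frac{1-e^{-\mathrm{ad}\,x}}{\mathrm{ad}\,x}$ terminates). Hence $(\Phi^{-1})^*\zeta^a$ and $(\Phi^{-1})^*\bar\zeta^a$ are $1$-forms on $\C^n$ whose coefficients in $dw,d\bar w$ are polynomials in $(w,\bar w)$. Consequently their wedge products have polynomial coefficients, and all their derivatives have polynomial growth. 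It then remains to show that each function $f=a_{I\bar K}\circ\Phi^{-1}$ on $\C^n$ has derivatives of all orders of polynomial growth. Then $\alpha$, a finite sum of products of such functions with polynomial forms, lies in $\Apoly$ by the Leibniz rule.

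For the derivatives of $f$, I would express $\partial/\partial w_j$ and $\partial/\partial\bar w_j$ in terms of the left-invariant frame $Z_b,\bar Z_b$ pushed forward by $\Phi$. The change of frame matrix is given by the coefficients of $(\Phi^{-1})^*\zeta^a$ and $(\Phi^{-1})^*\bar\zeta^a$ in the basis $dw,d\bar w$, which are polynomial. Its inverse consists of the components of the pushed-forward vector fields $\Phi_*Z_b$ in the coordinate basis, which are $d\Phi$ applied to the polynomial vector fields $Z_b$ in exponential coordinates, composed with $\Phi^{-1}$. These are also polynomial. So $\partial/\partial w_j=\sum_b(p_{jb}\,\Phi_*Z_b+q_{jb}\,\Phi_*\bar Z_b)$ with $p_{jb},q_{jb}$ polynomial. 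By induction on $k$, $\partial^{(k)}f$ is a finite sum of polynomial coefficients times iterated left-invariant derivatives of $a_{I\bar K}$, transported by $\Phi^{-1}$. The iterated derivatives are bounded, being $\Gamma$-invariant continuous functions on compact $M$, which gives $|\partial^{(k)}f|\le C_k(1+|w|)^{N_k}$.

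The main obstacle I expect is the justification that the frame-change matrices in both directions are polynomial. This needs both directions of Theorem~\ref{thm:hrstw} and the polynomiality of the Maurer--Cartan form and of the left-invariant vector fields in exponential coordinates. These are standard consequences of nilpotency, but the bookkeeping of composing with $\Phi^{-1}$ must be done carefully.
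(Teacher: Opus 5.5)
Your proof is correct and follows the paper's own argument. You express the form in the left-invariant coframe, so the coefficients descend to the compact $M$ and are bounded together with all their $Z_j$, $\bar Z_j$ derivatives; you then write $\partial_{w_k}$ in terms of $\Phi_*Z_j$, $\Phi_*\bar Z_j$ with polynomial coefficients and induct on the derivative order. Your justification of polynomiality through the terminating Maurer--Cartan series in exponential coordinates is more careful than the paper's, which attributes the frame change to the Jacobians of $\Phi$ and $\Phi^{-1}$ alone.
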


\begin{proof}
Let $\zeta^j$ be a coframe of left-invariant $(1,0)$-forms on $G$. Their push-forwards $\Phi_*\zeta^1,\dots,\Phi_*\zeta^n$ are $\rho(G)$-invariant $(1,0)$-forms on $\C^n$ whose coefficients are polynomial in $(w,\bar w)$. Their conjugates give $\Phi_*\bar\zeta^k$. The coframe change to $\{dw^k,d\bar w^k\}$ is given by the Jacobian of $\Phi$, while the inverse coframe change is the Jacobian of $\Phi^{-1}$, so both are polynomial by Theorem~\ref{thm:hrstw}. A $\Gamma$-invariant form descends to the compact $M$, so its coefficients in the invariant coframe $\{\Phi_*\zeta^I\wedge\Phi_*\bar\zeta^J\}$ are bounded, together with all their derivatives, along invariant vector fields. Rewriting in $\{dw^I\wedge d\bar w^J\}$ multiplies by polynomial factors, giving polynomial growth of every coefficient. 

The same holds for every derivative. Indeed, write
\[
\partial_{w_k}=\sum_j\big(a_{kj}\,\Phi_*Z_j+b_{kj}\,\Phi_*\bar Z_j\big),
\]
where $Z_1,\dots,Z_n$ is the left-invariant frame dual to $\zeta^1,\dots,\zeta^n$ and the $a_{kj},b_{kj}$ are entries of the Jacobian of $\Phi^{-1}$, hence polynomial in $(w,\bar w)$. Similarly for $\partial_{\bar w_k}$. By induction on $m$, $\partial^{(m)}$ applied to a coefficient function is a combination, with polynomial coefficients, of iterated $\Phi_*Z$- and $\Phi_*\bar Z$-derivatives of that function: the inductive step differentiates a product, and differentiating the $a_{kj},b_{kj}$ again produces polynomials. Each iterated invariant derivative descends to $M$ and is therefore bounded, so every $\partial^{(m)}$ of every coefficient has polynomial growth.
\end{proof}

\subsection{$\Gamma$-acyclicity of smooth forms.}
\begin{lemma}\label{lemma:acyclic}
Let $\Gamma$ act freely, properly discontinuously and cocompactly on $\C^n$ by diffeomorphisms. Then for each $(p,j)$, the $\Gamma$-module $A^{p,j}(\C^n)$ of smooth forms is acyclic ($H^{i}(\Gamma, A^{p,j}(\C^n))=0$ for $i>0$), and $H^0(\Gamma, A^{p,j}(\C^n))= A^{p,j}(M)$.
\end{lemma}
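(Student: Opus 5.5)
The plan is to show that $A^{p,j}(\C^n)$ is a \emph{coinduced} $\Gamma$-module, and then apply Shapiro's lemma. Throughout, $A^{p,j}(\C^n)$ carries the action $\gamma\cdot\alpha=\rho(\gamma^{-1})^*\alpha$, which preserves type $(p,j)$ because the deck transformations are holomorphic.

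First I will fix a fundamental-domain-type decomposition. Since the action is free, properly discontinuous and cocompact, I choose a smooth function $\chi\ge0$ with compact support on $\C^n$ such that
\[
\sum_{\gamma\in\Gamma}\gamma^*\chi\equiv1 .
\]
To build it, take a compactly supported bump $\psi\ge0$ that is positive on a compact set $K$ with $\Gamma K=\C^n$. Then $\Psi=\sum_\gamma\gamma^*\psi$ is a locally finite sum, so it is smooth, $\Gamma$-invariant and positive. Set $\chi=\psi/\Psi$.

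Next comes the homotopy argument. Let $f\in C^i(\Gamma,A^{p,j}(\C^n))$ be a cocycle, with $i\ge1$, written in the homogeneous normalization. I define
\[
(hf)(\gamma_0,\dots,\gamma_{i-1})\coloneqq\sum_{\gamma\in\Gamma}(\gamma\cdot\chi)\,f(\gamma,\gamma_0,\dots,\gamma_{i-1}),
\]
where $\gamma\cdot\chi=\sigma(\gamma)\chi$ is the translated cutoff. Three points need checking.
\begin{itemize}
\item The sum is locally finite, because the supports of the $\gamma\cdot\chi$ are locally finite. Hence $hf$ is a well-defined smooth $(p,j)$-form, and no topology or growth condition on $f$ is required.
\item $hf$ is $\Gamma$-equivariant. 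This follows by reindexing $\gamma\mapsto\eta\gamma$ and using that the action on forms is multiplicative: $\eta\cdot(\phi\,\omega)=(\eta\cdot\phi)(\eta\cdot\omega)$.
\item The standard identity $\delta h+h\delta=\mathrm{id}$ holds on cochains of degree $i\ge1$. It uses the partition of unity $\sum_\gamma\gamma\cdot\chi=1$; since $\Gamma$ is a group, this is the same condition as $\sum_\gamma\gamma^*\chi=1$.
\end{itemize}
Consequently every cocycle of positive degree is a coboundary.

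Equivalently, one can phrase this module-theoretically. Multiplication by the translates $\gamma\cdot\chi$ exhibits $A^{p,j}(\C^n)$ as a direct summand of $\prod_\gamma \gamma\cdot A_c$, where $A_c$ denotes the compactly supported forms. That product is coinduced from the trivial subgroup, so it is acyclic by Shapiro's lemma. I will present the explicit homotopy version, since it avoids any topology.

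Finally, $H^0$ is the invariants $A^{p,j}(\C^n)^\Gamma$. Because $\pi\colon\C^n\to M$ is a holomorphic covering, pulling back identifies $A^{p,j}(M)$ with the $\Gamma$-invariant smooth $(p,j)$-forms, and pullback is injective because $\pi$ is a surjective local diffeomorphism.

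The only delicate step is checking the signs in the homotopy identity, which I expect to be routine. I do not anticipate a genuine obstacle.
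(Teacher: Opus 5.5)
Your proof is correct and is essentially the paper's. It uses the same cutoff $\chi=\psi/\sum_\gamma\gamma\cdot\psi$, the same contracting homotopy $(hf)(\gamma_0,\dots,\gamma_{i-1})=\sum_\gamma(\gamma\cdot\chi)\,f(\gamma,\gamma_0,\dots,\gamma_{i-1})$ with $\delta_\Gamma h+h\delta_\Gamma=\mathrm{id}$ in positive degree, and the same identification of $H^0$ with forms on $M$. One small fix in your Shapiro aside: the retraction $F\mapsto\sum_\gamma\gamma\cdot F(\gamma)$ is only a locally finite sum if the components $F(\gamma)$ are supported in the fixed compact set $\mathrm{supp}\,\chi$, with section $\alpha\mapsto\bigl(\gamma\mapsto\chi\cdot(\gamma^{-1}\cdot\alpha)\bigr)$; arbitrary compactly supported forms do not guarantee this.
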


\begin{proof}
Cocompactness gives a compact $K\subset\C^n$ whose $\Gamma$-translates cover $\C^n$. Choose a bump function $\psi\in C^\infty_c(\C^n)$ with $\psi\equiv1$ on $K$, and set $\chi \coloneqq \psi / ( \sum_{\gamma\in\Gamma} \gamma \cdot \psi)$. The sum is locally finite by proper discontinuity, and positive on $\C^n$. Then $\sum_{\gamma}\gamma\cdot\chi=1$. On homogeneous cochains define
\[
(hf)(\gamma_0,\dots,\gamma_{i-1})\coloneqq\sum_{\gamma\in\Gamma}(\gamma\cdot\chi)\,f(\gamma,\gamma_0,\dots,\gamma_{i-1}),
\]
a locally finite sum. Multiplication by the compactly supported $\gamma \cdot \chi$ preserves $A^{p,j}$. The cochain $hf$ is again equivariant: for $\eta\in\Gamma$,
\begin{align*}
(hf)(\eta\gamma_0,\dots,\eta\gamma_{i-1}) &=\sum_{\gamma}(\gamma\cdot\chi)\,f(\gamma,\eta\gamma_0,\dots,\eta\gamma_{i-1}) \\
&=\sum_{\gamma}(\eta\gamma\cdot\chi)\,f(\eta\gamma,\eta\gamma_0,\dots,\eta\gamma_{i-1}) \\
&=\eta\cdot (hf)(\gamma_0,\dots,\gamma_{i-1}),
\end{align*}
where we reindexed $\gamma\mapsto\eta\gamma$ and used equivariance of $f$. For $i\ge1$, we get
\[
(\delta_\Gamma hf+h\delta_\Gamma f)(\gamma_0,\dots,\gamma_i)=\sum_{\gamma}(\gamma\cdot\chi)\,f(\gamma_0,\dots,\gamma_i)=f(\gamma_0,\dots,\gamma_i),
\]
since all the remaining terms of $\delta_\Gamma hf$ cancel against those of $h\delta_\Gamma f$ and $\sum_\gamma\gamma\cdot\chi=1$. Hence higher cohomology vanishes. Finally, $H^0$ consists of $\Gamma$-invariant forms, i.e., forms descending to $M$.
\end{proof}

\subsection{The Cartan--Leray double complex.}
For each fixed $p$, consider the first-quadrant double complex of homogeneous group cochains valued in smooth forms
\[
K^{i,j}_p \coloneqq C^i\big(\Gamma, A^{p,j}(\C^n)\big),
\]
with horizontal differential the group coboundary $\delta_\Gamma$, and with vertical differential $\dbar$. We use the total differential $D_{\mathrm{tot}} \coloneqq\delta_\Gamma+(-1)^i\dbar$ on $K^{i,j}_p$, so that $D_{\mathrm{tot}}^2=0$.

\begin{proposition}\label{prop:doublecx}
$H^\bullet_{\mathrm{tot}}(K^{\bullet,\bullet}_p)\cong H^{p,\bullet}_{\dbar}(M)$, and the induced edge map factors through invariant forms. Explicitly:
\begin{enumerate}[label=\textup{(\roman*)}]
\item taking $\dbar$-cohomology first, the columns are exact for $j\ge1$ with $\ker(\dbar\colon A^{p,0}(\C^n)\to A^{p,1}(\C^n))=\Omega^p(\C^n)$, so the $j$-first spectral sequence degenerates to $H^i(\Gamma,\Omega^p(\C^n))$;

\item taking $\delta_\Gamma$-cohomology first, the rows are $\Gamma$-acyclic (Lemma~\ref{lemma:acyclic}), so the $i$-first spectral sequence degenerates to $H^{p,\bullet}_{\dbar}(M)$.
\end{enumerate}

Both spectral sequences compute $H^\bullet_{\mathrm{tot}}(K_p)$. Comparing them gives a natural isomorphism $H^{p,q}_{\dbar}(M)\cong H^q(\Gamma,\Omega^p(\C^n))$ for all $(p,q)$.

\end{proposition}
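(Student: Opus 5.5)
The plan is the standard comparison of the two spectral sequences of the first-quadrant double complex $K_p^{\bullet,\bullet}$. Both filtrations are bounded in each total degree (since $i,j\ge0$ and $j\le n$), so both spectral sequences converge to $H^\bullet_{\mathrm{tot}}(K_p)$. Once we show that each one collapses onto a single row or column at the $E_2$ page, the edge maps become isomorphisms. Composing them gives $H^{p,q}_{\dbar}(M)\cong H^q(\Gamma,\Omega^p(\C^n))$.

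\textbf{Step (i): $\dbar$-cohomology first.} Here the point is that $C^i(\Gamma,-)$, the space of equivariant maps $\Gamma^{i+1}\to V$, is just the space of arbitrary maps $\Gamma^i\to V$ (evaluate at $(e,\gamma_1,\dots)$). It is therefore an exact functor of $V$ for discrete groups. It thus suffices that the Dolbeault complex
\[
0\to\Omega^p(\C^n)\to A^{p,0}(\C^n)\xrightarrow{\dbar}A^{p,1}(\C^n)\xrightarrow{\dbar}\cdots
\]
is exact. This is the Dolbeault--Grothendieck lemma on the Stein (indeed convex) domain $\C^n$: $H^{p,j}_{\dbar}(\C^n)=0$ for $j\ge1$. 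One could also get it from Hörmander's theorem with a suitable exhausting weight. Moreover $\Gamma$ acts by biholomorphisms, so $\dbar$ commutes with the action and everything is $\Gamma$-equivariant. The $E_1$ page is then $C^i(\Gamma,\Omega^p(\C^n))$ in row $j=0$ and zero elsewhere, so $E_2=E_\infty=H^i(\Gamma,\Omega^p(\C^n))$ concentrated on one row. Hence $H^q_{\mathrm{tot}}\cong H^q(\Gamma,\Omega^p(\C^n))$.

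\textbf{Step (ii): $\delta_\Gamma$-cohomology first.} By Lemma~\ref{lemma:acyclic}, applied to the free, properly discontinuous, cocompact action $\rho(\Gamma)$ on $\C^n$ (cocompact because $M$ is compact), each row has cohomology $0$ for $i>0$ and $A^{p,j}(M)$ for $i=0$. The induced vertical differential is $\dbar$ on $\Gamma$-invariant forms, which is the Dolbeault differential of $M$, because $\pi$ is a local biholomorphism. So $E_2=H^{p,j}_{\dbar}(M)$ sits in column $i=0$, and the sequence degenerates, giving $H^q_{\mathrm{tot}}\cong H^{p,q}_{\dbar}(M)$. This edge map is the inclusion $A^{p,\bullet}(M)=A^{p,\bullet}(\C^n)^\Gamma\hookrightarrow K^{0,\bullet}_p$. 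Invariant forms on $\g$ factor through it, since they pull back to $\Gamma$-invariant forms; this gives the claimed factorization.

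\textbf{Main obstacle.} The main point to check is naturality and signs with $D_{\mathrm{tot}}=\delta_\Gamma+(-1)^i\dbar$, and that the zigzag identification is well defined. Because each spectral sequence has a single nonzero line at $E_2$ (resp.\ $E_1$), the usual argument applies: the inclusions $C^\bullet(\Gamma,\Omega^p(\C^n))\to\mathrm{Tot}(K_p)\leftarrow A^{p,\bullet}(M)$ are both quasi-isomorphisms. This follows from the acyclic assembly lemma, or from a direct filtration argument using bounded filtrations, and it yields the natural isomorphism with no further computation.
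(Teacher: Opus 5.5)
Your proposal is correct and follows essentially the paper's argument. The columns are exact because $C^i(\Gamma,V)$ is identified with arbitrary maps $\Gamma^i\to V$, i.e.\ a product of copies of the exact Dolbeault complex of the Stein manifold $\C^n$; the rows are $\Gamma$-acyclic by Lemma~\ref{lemma:acyclic}; so both spectral sequences of the first-quadrant double complex collapse onto a single line. Your remarks on the $\Gamma$-equivariance of $\dbar$ and on the two inclusions into the total complex being quasi-isomorphisms make explicit what the paper leaves implicit.
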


\begin{proof}
$K^{\bullet,\bullet}_p$ is first-quadrant, so the two spectral sequences induced by the row and column filtrations both converge to $H^\bullet_{\mathrm{tot}}(K_p)$.

For (i), the complex $(A^{p,\bullet}(\C^n),\dbar)$ is exact in positive degrees, with kernel $\Omega^p(\C^n)$ in degree $0$. This is the vanishing $H^{p,j}_{\dbar}(\C^n)=0$ for $j\ge1$, which holds because $\C^n$ is Stein. Exactness passes to the columns: as vector spaces $C^i(\Gamma,V)\cong\prod_{\Gamma^i}V$, an equivariant homogeneous cochain being determined by arbitrary values on a set of representatives, and an arbitrary product of exact sequences of vector spaces is exact. Hence the $j$-first sequence has $E_1$ concentrated in the row $j=0$, where it reads $C^\bullet(\Gamma,\Omega^p(\C^n))$, and it degenerates to $H^i(\Gamma,\Omega^p(\C^n))$.

For (ii), the rows are $\Gamma$-acyclic by Lemma~\ref{lemma:acyclic}, so the $i$-first sequence has $E_1$ concentrated in the column $i=0$, where it reads $A^{p,\bullet}(M)$, and it degenerates to $H^{p,\bullet}_{\dbar}(M)$.
\end{proof}

\subsection{Proof of Theorem~\ref{thm:E1}.}\label{sec:proof:E1}
\begin{proof}[Proof of Theorem~\ref{thm:E1}]
Fix $[\alpha]\in H^{p,q}_{\dbar}(M)$ and choose any smooth representative $\alpha$. Its lift $\tilde\alpha$ to $\C^n$ is $\Gamma$-invariant and $\dbar$-closed, hence lies in $\Apoly$ by Lemma~\ref{lemma:frame}. Throughout we use the identifications of Proposition~\ref{prop:doublecx}.

If $q=0$, then $\tilde\alpha$ is a $\Gamma$-invariant holomorphic $p$-form lying in $\Apoly[p,0]$, hence in $\Opoly$ by Lemma~\ref{lemma:liouville}. Since it is a $\Gamma$-invariant element of $\Opoly$ mapping to $[\alpha]$, there is nothing more to prove. 

Assume from now on $q\ge1$. We produce a $\Gamma$-$q$-cocycle valued in $\Opoly$ mapping to $[\alpha]$, by the standard zig-zag in $K^{\bullet,\bullet}_p$, kept inside the polynomial-growth class.

We compute in the inhomogeneous normalization of $K^{\bullet,\bullet}_p$. An inhomogeneous $i$-cochain is an arbitrary map $\Gamma^i\to A^{p,j}(\C^n)$. A $0$-cochain is a single form, and the coboundary is
\begin{align*}
(\delta_\Gamma f)(\gamma_1,\dots,\gamma_{i+1}) &=\gamma_1\cdot f(\gamma_2,\dots,\gamma_{i+1})\\
&+\sum_{k=1}^{i}(-1)^kf(\gamma_1,\dots,\gamma_k\gamma_{k+1},\dots,\gamma_{i+1})\\ 
&+(-1)^{i+1}f(\gamma_1,\dots,\gamma_i),
\end{align*}
so that $(\delta_\Gamma\beta)(\gamma)=\gamma\cdot\beta-\beta$ in degree $0$. The correspondence between inhomogeneous and homogeneous cochains of Section~\ref{sec:groupcoh} identifies these cochains with those of $K^{\bullet,\bullet}_p$, in a way compatible with $\dbar$ and the bigrading. Only one summand of $\delta_\Gamma f$ carries a module action, a point used repeatedly below.

By Lemma~\ref{lemma:pg}, take $\beta_0 \in \Apoly[p,q-1]$ such that $\dbar\beta_0=\tilde\alpha$, and set $c^1 \coloneqq \delta_\Gamma \beta_0$. For each fixed $\gamma$, we have that
\[
(\delta_\Gamma\beta_0)(\gamma)=\gamma\cdot\beta_0-\beta_0=\rho(\gamma^{-1})^*\beta_0-\beta_0.
\]
By Proposition~\ref{prop:uniform:degree} the map $\rho(\gamma^{-1})$ is a polynomial automorphism of $\C^n$, so the pullback composes the coefficients of $\beta_0$ with a polynomial map and multiplies them by polynomial Jacobian factors. Each $\partial^{(k)}\big(\rho(\gamma^{-1})^*\beta_0\big)$ is then a finite sum of terms $(\partial^{(j)}\beta_0)\circ\rho(\gamma^{-1})$ multiplied by products of derivatives of $\rho(\gamma^{-1})$, all of polynomial growth. Hence $\rho(\gamma^{-1})^*\beta_0$ has polynomial growth, and $c^1\in C^1(\Gamma, \Apoly[p,q-1])$ is $\dbar$-closed, since $\dbar\delta_\Gamma\beta_0=\delta_\Gamma\dbar\beta_0=\delta_\Gamma\tilde\alpha=0$ (because $\tilde\alpha$ is invariant, so $\delta_\Gamma\tilde\alpha=0$).

If $q=1$, $c^1$ takes values in $\Apoly[p,0]$, and is $\dbar$-closed, hence polynomial by Lemma~\ref{lemma:liouville}. So we have that $c^1\in C^1(\Gamma,\Opoly)$, and the zig-zag stops with $c^q=c^1$. 

If $q \ge 2$, we solve the $\dbar$-equation for each value $c^1(\gamma)$, $\gamma \in \Gamma$, separately by Lemma~\ref{lemma:pg}, obtaining $\beta_1\in C^1(\Gamma, \Apoly[p,q-2])$ with $\dbar\beta_1=c^1$. No uniformity in $\gamma$ is required. Indeed, by the convention of Section~\ref{sec:groupcoh}, the cochains of $K^{\bullet,\bullet}_p$ are arbitrary set maps $\Gamma^i\to A^{p,j}(\C^n)$, so the choice may be made independently for each argument.

The inductive step is the same. Assume $\beta_k\in C^k(\Gamma,\Apoly[p,q-k-1])$ has been constructed with $\dbar\beta_k=c^k$, and set $c^{k+1}\coloneqq\delta_\Gamma\beta_k$. For a fixed $\vec\gamma=(\gamma_1,\dots,\gamma_{k+1})$ the value $c^{k+1}(\vec\gamma)$ is a finite alternating sum of values of $\beta_k$, exactly one of which is acted on by $\gamma_1\cdot(-)=\rho(\gamma_1^{-1})^*$. By the previous paragraph every summand has polynomial growth, hence so does $c^{k+1}(\vec\gamma)$. Moreover $\dbar c^{k+1}=\delta_\Gamma\dbar\beta_k=\delta_\Gamma c^k=\delta_\Gamma^2\beta_{k-1}=0$, so Lemma~\ref{lemma:pg}, applied to each $\vec\gamma$ separately, yields $\beta_{k+1}\in C^{k+1}(\Gamma,\Apoly[p,q-k-2])$ with $\dbar\beta_{k+1}=c^{k+1}$. The growth degree of $c^{k+1}(\vec\gamma)$ depends on $k$ and on $\vec\gamma$.

After $q$ steps the staircase reaches
\[
c^q\coloneqq\delta_\Gamma\beta_{q-1}\in C^q(\Gamma, \Apoly[p,0]),\qquad \dbar c^q=0,
\]
so $c^q$ takes values in $\ker(\dbar \colon \Apoly[p,0] \to \Apoly[p,1])=\Opoly$ by Lemma~\ref{lemma:liouville}. In addition, $c^q$ is a group $q$-cocycle because $\delta_\Gamma c^q=\delta_\Gamma^2\beta_{q-1}=0$.

We compare the two classes in $H^q_{\mathrm{tot}}(K_p)$. Setting $c^0\coloneqq\tilde\alpha$, the construction gives $D_{\mathrm{tot}} \beta_k=c^{k+1}+(-1)^k c^k$ for $0\le k\le q-1$. Put $\varepsilon_0\coloneqq-1$ and $\varepsilon_k\coloneqq(-1)^{k+1}\varepsilon_{k-1}$ for $1\le k\le q-1$, so that $\varepsilon_{k-1}+(-1)^k\varepsilon_k=0$ and the sum gives
\[
D_{\mathrm{tot}}\Big(\sum_{k=0}^{q-1}\varepsilon_k\beta_k\Big)=\varepsilon_{q-1}c^q+\varepsilon_0c^0=\varepsilon_{q-1}c^q-\tilde\alpha,
\]
that is,
\[
\tilde\alpha + D_{\mathrm{tot}} \Big(\sum_{k=0}^{q-1}\varepsilon_k\beta_k\Big)=\varepsilon_{q-1}\,c^q.
\]
Thus the total-complex class of $\tilde\alpha$ (viewed in the column $i=0$) equals $\varepsilon_{q-1}$ times that of $c^q$ (viewed in the row $j=0$). Under the identifications of Proposition~\ref{prop:doublecx}, the image of $\varepsilon_{q-1}\,[c^q]\in H^q(\Gamma,\Opoly)$ under
\[
H^q(\Gamma,\Opoly)\to H^q(\Gamma,\Omega^p(\C^n))\cong H^{p,q}_{\dbar}(M)
\]
is $[\alpha]$. Surjectivity follows.
\end{proof}

\begin{remark}\label{rem:notclaimed}
$\Apoly$ might not be $\Gamma$-acyclic: the partition-of-unity homotopy of Lemma~\ref{lemma:acyclic} multiplies by a compactly supported $\chi$ whose $\Gamma$-translates have constants growing arbitrarily in $\gamma$. The staircase of the proof of Theorem \ref{thm:E1} avoids this, since it produces a single controlled zig-zag inside the acyclic complex $K^{\bullet,\bullet}_p$ of \emph{all} smooth cochains, using solvability within forms of polynomial growth only to keep the final $(p,0)$-form polynomial.
\end{remark}

\section{From group cohomology to invariant cohomology}\label{sec:algebraic}

In this section, we show that the group cohomology of $\Gamma$ with polynomial coefficients has the same dimension as the Dolbeault cohomology of the Chevalley--Eilenberg complex.

\begin{theorem}\label{thm:E2}
There is an isomorphism $H^q(\Gamma,\Opoly) \cong H^{p,q}_{\dbar} (\g, J)$. In particular, we have $\dim_\C H^q(\Gamma,\Opoly)=h^{p,q}(\g,J)$ for all $(p,q)$.
\end{theorem}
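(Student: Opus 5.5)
The plan is to pass from the discrete group $\Gamma$ to the complexified algebraic group $\GC$, and from there to the Lie algebra $\g^{0,1}$. The introduction points to Section~\ref{sec:model}, where $\rho$ is promoted to an algebraic action, and to Theorem~\ref{thm:unip}.

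\textbf{Step 1 (algebraic action).} Using the polynomial map $F$ of Proposition~\ref{prop:uniform:degree}, defined on $\g_\C\times\C^n$, and the BCH group law on $\g_\C$, I would define an algebraic action of the unipotent complex group $\GC=\exp(\g_\C)$ on $\C^n$ by polynomial automorphisms of degree at most $D$. The cocycle identity $F(\mathrm{BCH}(\xi,\eta),w)=F(\xi,F(\eta,w))$ holds for real $\xi,\eta$. Both sides are polynomial, so it extends to $\g_\C$ by Zariski density of $G$ in $\GC$.

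It follows that $\C[w]$ and $\Opoly$ are rational $\GC$-modules. The subspace of polynomials of degree $\le N$ is not $\GC$-stable, but it is contained in a finite-dimensional stable subspace, namely the degree $\le ND$ span of its orbit. Since $\GC$ is unipotent, these modules are locally unipotent; this is the content of Theorem~\ref{thm:unip}.

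\textbf{Step 2 (from $\Gamma$ to $\GC$).} The lattice $\Gamma$ is Zariski dense in the unipotent group $\GC$ (Mal'cev). For locally unipotent rational modules one has the comparison $H^q(\Gamma,V)\cong\Hrat^q(\GC,V)$. This is a theorem of Hochschild--Mostow type for unipotent groups: both sides equal $H^q(\g,V)$ by van Est/Nomizu-type arguments, and one can reduce to finite-dimensional unipotent submodules because cohomology commutes with direct limits ($\Gamma$ is finitely presented of type FP$_\infty$).

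\textbf{Step 3 (identify $\Opoly$ as an induced module).} Via $\Phi$, $\C^n$ is identified with $G$, and $G$ with $\GC/\exp(\g^{0,1})$ as a complex homogeneous space. Holomorphic $(p,0)$-forms on $G$ are $\exp(\g^{0,1})$-equivariant maps into $\mathcal E$, with the $\g^{0,1}$-module structure of Section~\ref{sec:invdolbeault}. I therefore expect
\[
\Opoly\cong\ind_{K}^{\GC}\mathcal{E},\qquad K=\exp(\g^{0,1}),
\]
with regular functions on $\GC$ matching polynomial functions in $w$. Shapiro's lemma for rational cohomology then gives $\Hrat^q(\GC,\ind_K^{\GC}\mathcal E)\cong\Hrat^q(K,\mathcal E)$, because $\GC/K$ is affine and induction is exact there. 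Finally, for the unipotent group $K$, rational cohomology equals Lie algebra cohomology, $H^q(\g^{0,1},\mathcal E)=H^{p,q}_{\dbar}(\g,J)$.

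\textbf{Main obstacle.} I expect Step 3 to be the hardest part. The difficulty is verifying that $\Phi$ intertwines the quotient $\GC\to\GC/K$ with an algebraic isomorphism $\GC/K\cong\C^n$ (not merely a biholomorphism), so that regular induced sections correspond exactly to polynomial forms. I would try first to use the polynomiality of $\Phi^{-1}$ together with the orbit map $\xi\mapsto F(\xi,0)$, and show that it is a surjective submersion $\g_\C\to\C^n$ whose fibres are $K$-cosets.
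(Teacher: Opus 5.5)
Your proposal is correct and is essentially the paper's argument. The shared steps are: the extension of $\rho$ to an algebraic $\GC$-action by Zariski density; local unipotence of $\C[w]$ and $\Opoly$ (Theorem~\ref{thm:unip}); the comparison $H^q(\Gamma,-)\cong H^q(\g,-)\cong\Hrat^q(\GC,-)$ on finite-dimensional unipotent submodules (Nomizu plus d\'evissage, then Hochschild's theorem), followed by a colimit; the identification $\Opoly\cong\ind_{\Gzo}^{\GC}\mathcal{E}$; Shapiro's lemma using that $X=\GC/\Gzo$ is affine; and Hochschild's theorem for $\Gzo$.

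The obstacle you single out is handled in Proposition~\ref{prop:model}, part (d), along the lines you suggest. The orbit map $x\mapsto x\cdot\Phi(e)$ restricts to $\Phi$ on $G$, hence is surjective, and its differential at $e$ has kernel $\g^{0,1}$ because $\Phi$ is $J$-holomorphic. The stabilizer is then a closed subgroup of a unipotent group with Lie algebra $\g^{0,1}$, so it is connected and equals $\Gzo$, which yields a $\GC$-equivariant isomorphism of varieties $X\cong\C^n$.
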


We use throughout the language of rational modules and algebraic induction of Section~\ref{sec:rational}. The proof of Theorem~\ref{thm:E2} consists of the chain of isomorphisms of cohomologies of Theorem~\ref{thm:chain}, which moves the coefficients from $\Gamma$-modules to $\g^{0,1}$-modules through the homogeneous model $X=\GC/\Gzo$ of Section \ref{sec:model}. The model contributes since $X$ is affine, $\C[w]=\C[X]$, and $\Opoly\cong\ind_{\Gzo}^{\GC}\Lambda^p(\g^{1,0})^*$ as rational $\GC$-modules (Proposition~\ref{prop:model}). Lemma~\ref{lemma:devissage} and the comparison theorems of Hochschild and Shapiro then apply. Their hypotheses follow from local unipotence (Theorem~\ref{thm:unip}), which for a unipotent $\GC$ acting on an affine variety is a formal consequence of the model.

\subsection{The homogeneous model.}\label{sec:model}
Let $\GC$ be the simply connected complex nilpotent group with Lie algebra $\g_\C$. By Section~\ref{sec:nilm}, both $\g^{1,0}$ and $\g^{0,1}$ are subalgebras. Write $G^{1,0}\coloneqq\exp\g^{1,0}$ and $\Gzo\coloneqq\exp\g^{0,1}$ for the corresponding closed complex subgroups of $\GC$, and set $X\coloneqq\GC/\Gzo$ with base point $o=e\Gzo$.

\begin{proposition}\label{prop:model}
\leavevmode
\begin{enumerate}[label=\textup{(\alph*)}]
\item $G\cap\Gzo=\{e\}$ and $G^{1,0}\cap\Gzo=\{e\}$.
\item $\GC=G^{1,0}\cdot\Gzo=G\cdot\Gzo$. The multiplication $G^{1,0}\times\Gzo\to\GC$ is an isomorphism of complex affine varieties, and $G\times\Gzo\to\GC$ is a real-analytic diffeomorphism.
\item In exponential coordinates, $X\cong\C^n$ polynomially, $\C[X]=\C[w]$ intrinsically, and, as rational $\GC$-modules, we have
\[
\C[w]\cong\ind_{\Gzo}^{\GC}\C,\qquad \Opoly\cong\ind_{\Gzo}^{\GC}\Lambda^p(\g_\C/\g^{0,1})^*=\ind_{\Gzo}^{\GC}\Lambda^p(\g^{1,0})^*.
\]
\item The deck action $\rho$ extends to an algebraic left action of $\GC$ on $\C^n$, and the orbit map $\mu\colon x\mapsto x\cdot\Phi(e)$ induces a $\GC$-equivariant isomorphism of varieties $X\xrightarrow{\ \sim\ }\C^n$ carrying $o$ to $\Phi(e)$. Equivalently, $\Phi$ identifies $(G,J)$ with $X$ in such a way that left translation by $g$ becomes $\rho(g)$. In particular, the deck action of $\Gamma$ is the restriction to $\Gamma\subset\GC$ of the algebraic left $\GC$-action on $X$.
\end{enumerate}
\end{proposition}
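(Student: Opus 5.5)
The plan is to prove (a) and (b) at the Lie algebra level and then pass to groups using unipotence, to obtain (d) from Proposition~\ref{prop:uniform:degree}, and to read off (c) from (b) and (d).

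\emph{Steps (a) and (b).} Since $\g_\C=\g^{1,0}\oplus\g^{0,1}$ and $\g\cap\g^{0,1}=0$ (a real vector $X$ with $JX=-iX$ must vanish), we also have $\g_\C=\g\oplus\g^{0,1}$ as real vector spaces. In a simply connected nilpotent group $\exp$ is a bijection, and the subgroups $G$, $G^{1,0}$, $\Gzo$ are the exponentials of their Lie algebras. For (a), suppose $g\in G\cap\Gzo$. Then $\log g\in\g\cap\g^{0,1}=0$, and the same argument works for $G^{1,0}$.

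For (b), the multiplication map $m\colon G^{1,0}\times\Gzo\to\GC$ is injective by (a). Its differential at $(e,e)$ is $(X,Y)\mapsto X+Y$, which is an isomorphism, and by equivariance it is an isomorphism at every point. To show surjectivity I would use the standard fact about unipotent groups: if $U=\GC$ is unipotent and $H_1,H_2$ are closed connected subgroups with $\mathfrak h_1\oplus\mathfrak h_2=\mathfrak u$, then $H_1\times H_2\to U$ is an isomorphism of varieties. I would prove this by induction on $\dim$ via the center. Take a central $Z=\exp\mathfrak z$ with $\mathfrak z$ one-dimensional. Either $\mathfrak z\subset\mathfrak h_i$, or one passes to the quotient by $Z$ and lifts. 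Alternatively, observe that $m$ is an injective, étale morphism of affine spaces of the same dimension whose components are polynomial by BCH. By Ax--Grothendieck it is then bijective, and an étale bijective morphism onto the normal variety $\GC$ is an isomorphism by Zariski's main theorem.

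The same reasoning over $\R$ handles $G\times\Gzo\to\GC$. It is an injective local real-analytic diffeomorphism, and surjectivity follows by the same central induction. I expect this surjectivity to be the main obstacle, because the real form argument cannot use Ax--Grothendieck directly. So for the $G\times\Gzo$ case I would do the central-series induction carefully.

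\emph{Step (d).} The map $F(\xi,w)$ of Proposition~\ref{prop:uniform:degree} is polynomial in $\xi\in\g_\C$. The action identity $F(\mathrm{BCH}(\xi,\eta),w)=F(\xi,F(\eta,w))$ holds for real $\xi,\eta$, and both sides are polynomial, so it holds on $\g_\C$. This defines an algebraic $\GC$-action $\rho$. The stabilizer of $\Phi(e)=0$ contains $\Gzo$, which follows from $J$-holomorphy. Indeed, the infinitesimal action of a left-invariant field $\bar Z$ on a holomorphic function at the base point vanishes, and the stabilizer's Lie algebra is complex and contains $\g^{0,1}$. The stabilizer is exactly $\Gzo$, because by (b) its intersection with $G$ is trivial: $G$ acts freely, being conjugate to $L_g$. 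Hence $\mu\colon X\to\C^n$ is an injective equivariant morphism. Its restriction to $G\cong X$ from (b) is $\Phi$, which is bijective. Since $\mu^{-1}$ restricted along $G$ is $\Phi^{-1}$, which is polynomial, $\mu$ is an isomorphism.

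\emph{Step (c).} By (b), $X\cong G^{1,0}\cong\g^{1,0}\cong\C^n$ polynomially. Combining this with (d) gives $\C[X]=\C[w]$, and $\C[X]=\ind_{\Gzo}^{\GC}\C$ holds by definition. For forms, a polynomial $p$-form on $X$ is a regular section of $\Lambda^pT^*X$. This bundle is $\GC\times_{\Gzo}\Lambda^p(\g_\C/\g^{0,1})^*$, since $T_oX=\g_\C/\g^{0,1}$ with the isotropy action dual to the one described in Section~\ref{sec:invdolbeault}. Regular sections of the associated bundle are exactly $\ind_{\Gzo}^{\GC}$ of the fibre. The identification $\g_\C/\g^{0,1}\cong\g^{1,0}$ gives the last equality. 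Because the isomorphisms are $\GC$-equivariant by (d), the module structures match $\sigma$.
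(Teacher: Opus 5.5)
The gap is the real half of (b): you never show that $G\times\Gzo\to\GC$ is onto. Your arguments for (a), for the complex half of (b) via Ax--Grothendieck and Zariski's main theorem, for extending $F$ to $\g_\C$ in (d), and for the module identifications in (c) are fine.

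The real surjectivity is not routine. Since $g\mapsto g\Gzo$ is a $J$-holomorphic open embedding of $G$ into $X\cong G^{1,0}\cong\C^n$, surjectivity already gives $(G,J)\cong\C^n$, which is the main theorem of \cite{HRSTW26}; the paper simply cites that reference for (a) and (b).

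The central induction you sketch does not go through. To keep $\g\oplus\g^{0,1}=\g_\C$ in a quotient by a central subgroup, you need central elements lying in $\g$ or in $\g^{0,1}$. A real $z_0\in\mathfrak z(\g)$ works once. After that, a central element of $\g_\C/\R z_0$ in the image of $\g$ must come from $\mathfrak z(\g)$, because the complex subspace $[w,\g_\C]$ cannot lie in the real line $\R z_0$ unless it vanishes. A central element in $\g^{0,1}$ must come from $\mathfrak z(\g)\cap J\mathfrak z(\g)$. So when $\dim\mathfrak z(\g)=1$ (which forces $J$ to be non-nilpotent), nothing is left after one step. The same objection applies to your central induction in the complex case: a central line lying in neither $\g^{1,0}$ nor $\g^{0,1}$ destroys directness in the quotient, and there only Ax--Grothendieck rescues you.

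Your (d) uses real (b) to cut the stabilizer down to $\Gzo$, so (d), and (c) through it, inherit the gap.

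The missing observation is that the stabilizer $S$ of $\Phi(e)$ can be determined without (b), which is how the paper proves (d).
\begin{itemize}
\item $d\mu_e$ is the $\C$-linear extension of the injective map $d\Phi_e|_{\g}$, so it is onto $\C^n$.
\item Hence $\ker d\mu_e$ has complex dimension $n$, and since it contains $\g^{0,1}$ it equals $\g^{0,1}$.
\item A closed subgroup of a unipotent group in characteristic zero is connected, so $S=\Gzo$.
\end{itemize}
Then $\mu$ is a bijective equivariant morphism, hence an isomorphism. Since $\mu(g\Gzo)=\Phi(g)$, the map $g\mapsto g\Gzo$ is a bijection $G\to X$. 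That is exactly $\GC=G\cdot\Gzo$ with unique factorization, and together with your local-diffeomorphism computation it recovers real (b) from Theorem~\ref{thm:hrstw}.

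If you want (b) independently of $\Phi$, there are two short routes.
\begin{itemize}
\item Use the real analogue of Ax--Grothendieck (Bia\l ynicki-Birula--Rosenlicht: an injective polynomial self-map of $\R^N$ is surjective). Apply it to $\mathrm{BCH}\colon\g\times\g^{0,1}\to\g_\C$, which is injective by (a).
\item Use the conjugation $\sigma$ of $\GC$ fixing $G$, which swaps $G^{1,0}$ and $\Gzo$. Factor $\sigma(y)^{-1}y=ba$ with $b\in G^{1,0}$ and $a\in\Gzo$. Uniqueness of the factorization of $\sigma(\sigma(y)^{-1}y)=(\sigma(y)^{-1}y)^{-1}$ in $\Gzo\cdot G^{1,0}$ forces $\sigma(a)=b^{-1}$. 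Hence $x=ya^{-1}=\sigma(y)b$ is $\sigma$-fixed, so $x\in G$ and $y=xa$.
\end{itemize}
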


\begin{proof}
Parts (a) and (b) are established in \cite{HRSTW26}. We prove (d) first, then the module descriptions of (c).

\textbf{Proof of (d). }Let $F$ be the polynomial map of Proposition~\ref{prop:uniform:degree}, defined on $\g_\C\times\C^n$, and recall that $\rho(G)$ acts simply transitively on $\C^n$, because $L$ does so on $G$, and $\Phi$ is a bijection. The action identity
\[
F\big(\mathrm{BCH}(\xi,\eta),w\big)=F\big(\xi,F(\eta,w)\big)
\]
holds on $\g\times\g\times\C^n$. Both sides are polynomial and $\g$ is Zariski-dense in $\g_\C$, so it holds on $\g_\C\times\g_\C\times\C^n$. As $\GC=\exp\g_\C$, this defines an algebraic left action of $\GC$ on $\C^n$ extending $\rho$.

Consider the orbit map $\mu\colon\GC\to\C^n$, $\mu(x)=x\cdot\Phi(e)$. Since $\rho(g)\Phi(e)=\Phi(g)$, we have $\mu|_G=\Phi$, so $\mu$ is surjective and $\C^n$ is a single $\GC$-orbit. The action is algebraic, hence $d\mu_e$ is the $\C$-linear extension of $d\Phi_e|_{\g}$, that is $d\mu_e(X+iY)=d\Phi_e(X)+i\,d\Phi_e(Y)=d\Phi_e(X+JY)$ for $X,Y\in\g$, using that $\Phi$ is a biholomorphism $(G,J)\to\C^n$. Its kernel is $\{-JY+iY:Y\in\g\}=\g^{0,1}$. Consequently the stabilizer $S$ of $\Phi(e)$ is a closed algebraic subgroup of $\GC$ with $\mathrm{Lie}(S)=\ker d\mu_e=\g^{0,1}$, the orbit map being separable in characteristic zero. A closed subgroup of a unipotent group in characteristic zero is unipotent, hence connected, so $S=\exp\g^{0,1}=\Gzo$. Therefore $\mu$ induces a $\GC$-equivariant isomorphism of varieties $X=\GC/\Gzo\xrightarrow{\ \sim\ }\C^n$ sending $o$ to $\Phi(e)$, which is $0$ under the normalization of Section~\ref{sec:setup}.

\textbf{Proof of (c).} By (d), $\Phi$ is a polynomial chart on $X$ and $\C[X]=\C[w]$. Two polynomial charts differ by a polynomial automorphism, so $\C[w]$ is intrinsic, independent of the chart and of the particular $\Phi$. By definition of algebraic induction, $\C[w]=\C[\GC/\Gzo]=\C[\GC]^{\Gzo}=\ind_{\Gzo}^{\GC}\C$. More generally, $\Opoly$ is the space of algebraic $p$-forms on the affine $X$. Algebraic sections of $\Lambda^p T^*X=\GC\times_{\Gzo}\Lambda^p(\g_\C/\g^{0,1})^*$ are the
$\Gzo$-invariants of $\C[\GC]\otimes\Lambda^p(\g_\C/\g^{0,1})^*$, that is
$\ind_{\Gzo}^{\GC} \mathcal{E}$, identifying $\g_\C/\g^{0,1}$ with $\g^{1,0}$ as in Section~\ref{sec:invdolbeault}. Note that the resulting $\Gzo$-action on $\g^{1,0}$ is not the
restriction of $\Ad$, which does not preserve $\g^{1,0}$.
\end{proof}

\subsection{Local finiteness and unipotence.}
The chain of isomorphisms in Theorem~\ref{thm:chain} passes through finite-dimensional coefficients and a colimit. This subsection provides both: an exhaustion of $\C[w]$ and
$\Opoly$ by finite-dimensional $\GC$-submodules, and the unipotence of the $\GC$-action on each of them.

\begin{construction}[$G_\C$-stable exhaustion]\label{con:Wm}
By Proposition~\ref{prop:model}, parts (c) and (d), the $\Gamma$-module $\C[w]=\C[X]$ carries the restriction of the algebraic left $\GC$-action on $X$. For $\gamma\in\Gamma$, this action is $\gamma\cdot f=\rho(\gamma^{-1})^*f$. By Proposition~\ref{prop:model}, part (d), the $\GC$-action is given by the polynomial map $F$, so it has $w$-degree $\le D$, the bound of Proposition~\ref{prop:uniform:degree}. Write $\C[w]_{\le m}$ for the space of polynomials of total degree $\le m$ in $w_1,\dots,w_n$. For $m\ge0$ set
\[
W_m \coloneqq \mathrm{span}_\C\{x\cdot f : x\in\GC,\ f\in\C[w]_{\le m}\}\ \subseteq\ \C[w]_{\le Dm}.
\]
Each $W_m$ is $\GC$-stable, finite-dimensional, contains $\C[w]_{\le m}$, and $\bigcup_m W_m=\C[w]$. Similarly, consider $W_m^{(p)}\subseteq\Opoly$, the $\GC$-span of the $x\cdot\omega$ with $\omega=\sum_I f_I\,dw^I$, $\deg f_I\le m$. Since $x\cdot\omega=\rho(x^{-1})^*\omega$ is composing $f_I$ with a map of $w$-degree $\le D$ and multiplies them by $p$ entries of the Jacobian, each of $w$-degree $\le D-1$ (recall $D\ge1$ by Proposition~\ref{prop:uniform:degree}), we get
\[
W_m^{(p)}\ \subseteq\ \Big\{\textstyle\sum_I f_I\,dw^I\ :\ \deg f_I\le Dm+p(D-1)\Big\},
\]
so $W_m^{(p)}$ is finite-dimensional and $\GC$-stable, it contains the forms of coefficient degree $\le m$, and $\bigcup_m W_m^{(p)}=\Opoly$.
\end{construction}

\begin{theorem}\label{thm:unip}
$\C[w]$ and $\Opoly$ are locally finite, locally unipotent rational $\GC$-modules. Each is the directed union of the finite-dimensional $\GC$-submodules $W_m$, resp.\ $W_m^{(p)}$, and, on each of these, $\GC$ acts by a unipotent algebraic representation, with a $\GC$-stable complete flag whose one-dimensional subquotients are trivial. In particular, $G$ and $\Gamma$ act by unipotent operators.
\end{theorem}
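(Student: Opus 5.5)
The plan is to prove three things in turn: that $W_m$ and $W_m^{(p)}$ are rational modules, that $\GC$ is unipotent as an algebraic group, and that unipotence passes to every representation.

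\textbf{Rationality.} By Construction~\ref{con:Wm}, each $W_m$ and $W_m^{(p)}$ is finite-dimensional and $\GC$-stable, and together they exhaust $\C[w]$ and $\Opoly$. It remains to check that $\GC\to\GL(W_m)$ is a morphism of algebraic groups. The action is $x\cdot f=f\circ F(\log x^{-1},\cdot)$, where $F$ is polynomial in $(\xi,w)$ by Proposition~\ref{prop:uniform:degree} and Proposition~\ref{prop:model}(d). Inversion and $\log$ are polynomial on the unipotent group $\GC\cong\g_\C$. Hence the matrix coefficients of $x\mapsto x\cdot f$, read in the monomial basis of $\C[w]_{\le Dm}$, are polynomial in $\log x$. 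For forms, the Jacobian entries $\partial_{w_j}F^i(\xi,w)$ are also polynomial in $(\xi,w)$, so the same argument applies to $W_m^{(p)}$. This already gives rationality. Alternatively, one can quote the general fact that the coordinate ring of an affine variety with algebraic action is a rational module, via $\C[w]=\C[X]$ and Proposition~\ref{prop:model}(c).

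\textbf{Unipotence of $\GC$.} $\GC$ is a simply connected complex nilpotent group. Under $\exp$ it is isomorphic to $\g_\C$ with the BCH group law. The adjoint representation realizes $\g_\C$ as nilpotent matrices only modulo the center, so I would instead use Ado's theorem in its nilpotent form. This gives a faithful representation of $\g_\C$ by nilpotent matrices, which exponentiates to a closed embedding of $\GC$ into the strictly upper unitriangular matrices. So $\GC$ is a unipotent linear algebraic group, and its algebraic structure (polynomial BCH coordinates) matches the one used above.

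\textbf{Unipotence of representations, and the main obstacle.} The key step is that every rational representation $\pi\colon \GC\to\GL(V)$ of a unipotent group is unipotent. I would argue this concretely. For $\xi\in\g_\C$, the map $t\mapsto\pi(\exp t\xi)$ is a polynomial homomorphism $\C\to\GL(V)$, so $\pi(\exp t\xi)=\exp(tA)$ with $A=d\pi(\xi)$. An entire function $t\mapsto e^{tA}$ is polynomial in $t$ only if $A$ has no nonzero eigenvalue. Hence $d\pi(\xi)$ is nilpotent for every $\xi$.

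The image $d\pi(\g_\C)$ is then a Lie algebra of nilpotent operators. By Engel's theorem it has a common kernel vector, and induction gives a complete flag that $d\pi(\g_\C)$ preserves with zero action on the one-dimensional subquotients. Since $\GC=\exp\g_\C$ is connected, $\GC$ preserves the same flag and acts trivially on the subquotients. Restricting to $G$ and $\Gamma$ shows they act by unipotent operators.

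The main obstacle is justifying that the action of $\GC$ is genuinely algebraic, i.e., polynomial in $\log x$, rather than merely holomorphic. This rests on Proposition~\ref{prop:uniform:degree} and the Zariski-density extension in Proposition~\ref{prop:model}(d). Once that is in place, the polynomial-in-$t$ argument forces nilpotence and everything else is formal.
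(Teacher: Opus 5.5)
Your proof is correct, but it reaches both key conclusions by a different route from the paper's.

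\textbf{Rationality.} The paper does not compute matrix coefficients. It cites the general fact (Borel, I.1.9) that the coordinate ring of an affine variety with an algebraic action is a locally finite rational module, and applies it to $X$ to get $\C[w]$. For $\Opoly$ it uses the identification $\Opoly=(\C[\GC]\otimes\mathcal{E})^{\Gzo}$ from Proposition~\ref{prop:model}(c), as a submodule of the rational module $\C[\GC]\otimes\mathcal{E}$. You instead read off regularity directly from the polynomiality in $(\xi,w)$ of $F$ and of its Jacobian. This needs only Proposition~\ref{prop:uniform:degree} and Proposition~\ref{prop:model}(d), not the induced-module description.

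\textbf{Unipotence.} The paper takes unipotence of $\GC$ for granted and argues purely algebraically. A morphism of algebraic groups preserves unipotent parts, so $\alpha(\GC)=\alpha(\GC)_u$ is a unipotent subgroup of $\GL(W_m)$, and Kolchin's theorem (Borel, I.4.8) gives the flag. You replace this Jordan-decomposition and Kolchin machinery with elementary Lie theory. Along each one-parameter subgroup, $e^{t\,d\pi(\xi)}$ is polynomial in $t$, which forces $d\pi(\xi)$ to be nilpotent. Engel's theorem then applies to $d\pi(\g_\C)$, and you transfer the flag to the group using $\GC=\exp\g_\C$. Your appeal to the nilpotent form of Ado's theorem also supplies something the paper leaves implicit: $\GC$ is a unipotent linear algebraic group whose variety structure is the one given by exponential coordinates.

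That Ado step is needed only so that ``rational $\GC$-module'' is meaningful. Your nilpotence argument itself uses only polynomiality in $\log x$ and surjectivity of $\exp$.

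\textbf{Comparison.} The paper's version is shorter and applies verbatim to any affine variety with an algebraic action of a unipotent group. Yours is self-contained and explicit, and does not depend on identifying $\Opoly$ with an induced module.
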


\begin{proof}
By Proposition~\ref{prop:model}, parts (c) and (d), $X$ is an affine variety with an algebraic left $\GC$-action, $\C[w]=\C[X]$, and $\Opoly=(\C[\GC]\otimes \mathcal{E})^{\Gzo}$. The module structure of Section~\ref{sec:module} is left translation of functions, $(x\cdot f)(y)=f(x^{-1}y)$.

\emph{Local finiteness and rationality.} Let $V$ be an affine variety with an algebraic left $\GC$-action, with comorphism $\alpha^0\colon\C[V]\to\C[\GC]\otimes\C[V]$, and let $f\in\C[V]$. By \cite[Chapter~I, Section~1.9]{Bor91}, the line $\C f$ lies in a finite-dimensional $\GC$-stable subspace $E'\subseteq\C[V]$, and stability is equivalent to $\alpha^0 E'\subseteq\C[\GC]\otimes E'$. Choosing a basis $e_1,\dots,e_r$ of $E'$ and writing $\alpha^0e_j=\sum_i c_{ij}\otimes e_i$ with $c_{ij}\in\C[\GC]$, we get $x\cdot e_j=\sum_i c_{ij}(x^{-1})\,e_i$. Since inversion is a morphism, the matrix coefficients of $\GC\to\GL(E')$ are regular. Hence $\C[V]$ is a locally finite rational $\GC$-module.

Applied to $V=X$ this gives $\C[w]$. For $\Opoly$, apply it to $V=\GC$ with $\GC$ acting by left translation: $\C[\GC]$ is a rational $\GC$-module, hence so is $\C[\GC]\otimes \mathcal{E}$ with $\GC$ acting on the first factor. Left and right translations commute \cite[Chapter~I, Section~1.9]{Bor91}, so $\Opoly=(\C[\GC]\otimes \mathcal{E})^{\Gzo}$ is a $\GC$-submodule. A $\GC$-submodule $N$ of a rational module $M$ is rational, since for $n\in N$ a finite-dimensional stable $E'\subseteq M$ containing $n$ meets $N$ in a finite-dimensional stable subspace. Each $W_m$, resp.\ $W_m^{(p)}$, of Construction~\ref{con:Wm} is therefore a rational representation $\GC\to\GL(W_m)$, resp.\ $\GL(W_m^{(p)})$, and the unions are $\C[w]$, resp.\ $\Opoly$.

\emph{Unipotence.} The group $\GC$ is unipotent, that is $\GC=(\GC)_u$. For a morphism $\alpha$ of affine algebraic groups one has $\alpha(G_u)=\alpha(G)_u$ \cite[Chapter~I, Section~4.5]{Bor91}, so $\alpha(\GC)=\alpha(\GC)_u$ for $\alpha\colon\GC\to\GL(W_m)$, that is, the image is a unipotent subgroup of $\GL(W_m)$. By \cite[Chapter~I, Section~4.8]{Bor91}, it is conjugate to a subgroup of the group of upper triangular unipotent matrices, hence stabilizes a complete flag of $W_m$ with trivial one-dimensional subquotients. Restricting along $\Gamma\subset G\subset\GC$, the operators $\sigma(\gamma)=\rho(\gamma^{-1})^*$ act unipotently.
\end{proof}

\subsection{Proof of Theorem~\ref{thm:E2}.}

Our result is a consequence of a chain of isomorphisms in cohomology. The first link is the following comparison, which is where the unipotence provided by Theorem~\ref{thm:unip} is used.

\begin{lemma}\label{lemma:devissage}
Let $V$ be a finite-dimensional unipotent rational $\GC$-module. Then the comparison map $j_V\colon H^\bullet(\g,V)\to H^\bullet(\Gamma,V)$ is an isomorphism.
\end{lemma}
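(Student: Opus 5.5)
The plan is to combine Nomizu's theorem with a dévissage on the unipotent flag. Here $H^\bullet(\g,V)$ denotes Lie algebra cohomology of the real Lie algebra $\g$ (equivalently of $\g_\C$, since $V$ is a complex vector space) with coefficients in the differential of the $\GC$-action. The comparison map $j_V$ is the composite
\[
H^\bullet(\g,V)\to H^\bullet_{dR}(M;E_V)\xleftarrow{\sim} H^\bullet(\Gamma,V).
\]
Here $E_V=\Gamma\backslash(G\times V)$ is the flat bundle on $M$ with holonomy $\Gamma\to\GL(V)$. The first arrow sends a left-invariant $V$-valued form on $G$ to a $\Gamma$-equivariant form, i.e.\ a form on $M$ with values in $E_V$. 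The second arrow is the isomorphism $H^\bullet(\Gamma,V)\cong H^\bullet(M;E_V)$, which holds because $M$ is a $K(\Gamma,1)$: $G$ is contractible. Equivalently, one can define $j_V$ through van Est restricted to $\Gamma$. All maps are natural in $V$ and compatible with long exact sequences.

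We then argue by induction on $\dim V$. If $\dim V=1$, unipotence forces the action to be trivial, so $V=\C$. In that case $j_\C\colon H^\bullet(\g,\C)\to H^\bullet(\Gamma,\C)\cong H^\bullet_{dR}(M;\C)$ is exactly Nomizu's isomorphism \cite{Nom54}, which is available since $G$ is simply connected nilpotent. For $\dim V>1$, Theorem~\ref{thm:unip} (or directly \cite[Chapter~I, Section~4.8]{Bor91}) gives a $\GC$-stable line $V_1\subset V$ with trivial action, and hence a short exact sequence $0\to V_1\to V\to V/V_1\to0$ of unipotent rational $\GC$-modules. Restricted to $\g$ this is a sequence of $\g$-modules, and restricted to $\Gamma$ a sequence of $\Gamma$-modules, so both cohomology theories produce long exact sequences. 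Naturality of $j$ makes the connecting homomorphisms commute, up to the usual sign convention fixed once and for all. The outer terms are isomorphisms by induction, so the five lemma shows $j_V$ is an isomorphism.

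I expect the main obstacle to be the verification that $j$ is a morphism of $\delta$-functors, i.e.\ that it commutes with connecting maps. The cleanest route is to realize $j_V$ at the cochain level as a composite of natural chain maps. The left-invariant forms $\Lambda^\bullet\g^*\otimes V$ include into the twisted de Rham complex $A^\bullet(G)\otimes V$. Taking $\Gamma$-invariants gives $A^\bullet(M;E_V)$, which is quasi-isomorphic to $C^\bullet(\Gamma,V)$ via the double complex $C^\bullet(\Gamma,A^\bullet(G)\otimes V)$. This uses the same partition-of-unity acyclicity as Lemma~\ref{lemma:acyclic}, together with the Poincaré lemma on the contractible $G$. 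Every complex in this chain is exact in $V$: the Chevalley--Eilenberg complex trivially, the smooth-form complexes because they are tensor products, and the $\Gamma$-cochain complex because it is a product of copies of $V$. A short exact sequence of coefficients therefore gives a short exact sequence of double complexes, and naturality of the connecting maps follows formally. The remaining base case is then just Nomizu's theorem.
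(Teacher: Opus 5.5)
Your proposal is correct and follows the paper's proof essentially step for step. It realizes $j_V$ through the flat bundle $G\times_\Gamma V$ on the aspherical manifold $M$, uses that this is natural and exact in $V$, and then applies d\'evissage along a $\GC$-stable flag with trivial subquotients, reducing via the five lemma to Nomizu's theorem. The one point to make explicit is that the cochain map is the twisted one, $\omega_0\otimes v\mapsto\big(g\mapsto \omega_0|_g\otimes (g\cdot v)\big)$: it uses that the $\Gamma$-action on $V$ extends to $G$, makes the image $\Gamma$-equivariant, and turns $d$ into the Chevalley--Eilenberg differential with coefficients in $V$.
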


\begin{proof}
Since $M=\Gamma\backslash G$ is a closed aspherical manifold, hence a finite $K(\Gamma,1)$, the flat bundle $\mathcal V=G\times_\Gamma V$ satisfies $H^\bullet_{dR}(M,\mathcal V)\cong H^\bullet(\Gamma,V)$ \cite[Chapter~I, Proposition~4.2]{Bro82}, and $j_V$ is the composite of this isomorphism with the map induced by
\[
\Lambda^q\g^*\otimes V\ni\omega_0\otimes v\ \longmapsto\ \big(g\mapsto L_{g^{-1}}^*\omega_0\otimes(g\cdot v)\big),
\]
whose values satisfy $\tilde\omega(\gamma g)=\gamma\cdot\tilde\omega(g)$ and whose de Rham differential is the Chevalley--Eilenberg differential of the $\g$-action on $V$. Thus $j_V$ is a map of complexes, natural in $V$.

By Theorem~\ref{thm:unip}, choose a $\GC$-stable complete flag $0=V_0\subset\cdots\subset V_r=V$, with each $V_k/V_{k-1}\cong\C$ the trivial module. Each $0\to V_{k-1}\to V_k\to\C\to0$ is a short exact sequence of $\C\Gamma$- and of $\g$-modules, and induces short exact sequences of cochain complexes: both $C^\bullet(\Gamma,-)=\mathrm{Hom}_{\C\Gamma}(P_\bullet,-)$, for a free resolution $P_\bullet\to\C$ of the trivial module, and $C^\bullet(\g,-)=\Lambda^\bullet\g^*\otimes(-)$ are exact in the coefficient module. The resulting long exact sequences are compatible via $j$ by naturality. The case $V=\C$ is Nomizu's theorem \cite{Nom54}. Induction on $r$ and the five lemma conclude the proof.
\end{proof}

Let $\mathcal{E}$ as in Section~\ref{sec:invdolbeault}, regarded as a rational
$\Gzo$-module via Proposition~\ref{prop:model}, part (c). 

\begin{theorem}\label{thm:chain}
For all $(p,q)$ there is a chain of
isomorphisms
\begin{align*}
H^q(\Gamma,\Opoly)
\ &\cong \ H^q(\g,\Opoly)
\ \cong \ \Hrat^q\big(\GC,\,\ind_{\Gzo}^{\GC} \mathcal{E} \big)\\
&\cong \ \Hrat^q\big(\Gzo,\, \mathcal{E} \big)
\ \cong \ H^q\big(\g^{0,1}, \mathcal{E} \big)\ =\ H^{p,q}_{\dbar}(\g,J).
\end{align*}
In particular, Theorem~\ref{thm:E2} holds.
\end{theorem}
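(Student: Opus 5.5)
I will establish the four isomorphisms one at a time, each for the fixed module $\Opoly=\ind_{\Gzo}^{\GC}\mathcal{E}$. The last equality is just the identification $\Lambda^{p,q}\g_\C^*=C^q(\g^{0,1},\mathcal{E})$ from Section~\ref{sec:invdolbeault}.

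\textbf{Step 1: $H^q(\Gamma,\Opoly)\cong H^q(\g,\Opoly)$.} By Theorem~\ref{thm:unip}, $\Opoly=\bigcup_m W_m^{(p)}$ is a directed union of finite-dimensional unipotent rational $\GC$-modules. Lemma~\ref{lemma:devissage} gives compatible isomorphisms $H^q(\g,W_m^{(p)})\cong H^q(\Gamma,W_m^{(p)})$. It then remains to pass to the colimit on both sides.
\begin{itemize}
\item On the Lie algebra side, $C^\bullet(\g,-)=\Lambda^\bullet\g^*\otimes(-)$ commutes with directed colimits, since $\g$ is finite-dimensional.
\item On the group side, $\Gamma$ is a lattice in a nilpotent group. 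It is therefore finitely presented and of type $FP_\infty$; indeed $M$ is a finite $K(\Gamma,1)$. Hence $H^q(\Gamma,-)$ commutes with filtered colimits: compute it with a resolution by finitely generated free $\Z\Gamma$-modules coming from a finite CW structure on $M$.
\end{itemize}
Naturality of $j$ in $V$ makes the isomorphisms compatible with the transition maps, so the colimit is an isomorphism.

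\textbf{Step 2: $H^q(\g,\Opoly)\cong\Hrat^q(\GC,\Opoly)$.} This is Hochschild's comparison theorem. For a unipotent complex algebraic group (connected and simply connected), rational cohomology of a rational module agrees with the relative Lie algebra cohomology of $\g_\C$, here just $H^\bullet(\g_\C,V)=H^\bullet(\g,V)$, because $V$ is complex and $\g_\C$ acts by the complexified action. I would argue directly:
\begin{itemize}
\item Both sides commute with directed unions of finite-dimensional submodules. For $\Hrat$ this uses that the Hochschild complex $\C[\GC]^{\otimes t}\otimes V$ commutes with colimits in $V$.
\item Both sides satisfy the five-lemma d\'evissage along the unipotent flags of Theorem~\ref{thm:unip}.
\item This reduces everything to trivial coefficients, where $\Hrat^\bullet(\GC,\C)\cong H^\bullet(\g_\C,\C)$ is the classical Hochschild--Mostow/van Est statement for unipotent groups.
\end{itemize}

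\textbf{Step 3: Shapiro's lemma.} We need $\Hrat^q(\GC,\ind_{\Gzo}^{\GC}\mathcal{E})\cong\Hrat^q(\Gzo,\mathcal{E})$. Since $\ind$ is right adjoint to the exact restriction functor, it preserves injectives. It is exact because $X=\GC/\Gzo$ is affine (Cline--Parshall--Scott: induction is exact iff the quotient is affine), and Proposition~\ref{prop:model}(c),(d) gives exactly this affineness. Frobenius reciprocity $(\ind W)^{\GC}=W^{\Gzo}$ then upgrades to derived functors.

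\textbf{Step 4: $\Hrat^q(\Gzo,\mathcal{E})\cong H^q(\g^{0,1},\mathcal{E})$.} Apply Step 2 again, now to the unipotent group $\Gzo$ and the finite-dimensional module $\mathcal{E}$. The module $\mathcal{E}$ is unipotent because $\Gzo$ is unipotent (same argument as in Theorem~\ref{thm:unip}), and its differentiated action is the $\g^{0,1}$-action $\bar X\cdot Y=\pr^{1,0}[\bar X,Y]$. That last point needs to be checked against the identification $T_oX=\g_\C/\g^{0,1}$.

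\textbf{Main obstacle.} I expect the main obstacle to be Step 1's colimit argument together with the compatibility of all maps. One must make sure the abstract discrete group cohomology (arbitrary set cochains) commutes with the colimit, which needs the finite resolution rather than the bar complex. One must also make sure the composite isomorphism is independent of $m$.
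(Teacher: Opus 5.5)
Your proposal is correct and follows the paper's proof essentially step for step. Step 1 matches part (a): a colimit over the exhaustion $W_m^{(p)}$, Lemma~\ref{lemma:devissage}, and a finitely generated free resolution coming from the finite $K(\Gamma,1)$. Steps 2 and 4 invoke Hochschild's comparison for unipotent groups, as in parts (b) and (d). Step 3 matches part (c): exactness of induction over the affine quotient $X=\GC/\Gzo$ together with Frobenius reciprocity.

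The differences are minor. In Step 2 you derive Hochschild's comparison by d\'evissage along the unipotent flags down to trivial coefficients. That route still needs the trivial-coefficient case and a natural comparison map compatible with connecting homomorphisms, whereas the paper simply cites the theorem for finite-dimensional modules. Your check in Step 4 that the differentiated isotropy action on $\mathcal{E}$ is $\bar X\cdot Y=\pr^{1,0}[\bar X,Y]$ is a point the paper leaves implicit.
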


\begin{proof}
Write $\Opoly=\varinjlim_m W_m^{(p)}$ for the exhaustion of Construction~\ref{con:Wm}. By Theorem~\ref{thm:unip}, each $W_m^{(p)}$ is a finite-dimensional unipotent rational
$\GC$-module.
\medskip

\textbf{(a)} $H^q(\Gamma,\Opoly)\cong H^q(\g,\Opoly)$. The comparison map $j_V\colon H^\bullet(\g,V)\to H^\bullet(\Gamma,V)$ is the one of Lemma~\ref{lemma:devissage}, natural in $V$. Both functors commute with the filtered colimit $\Opoly=\varinjlim_m W_m^{(p)}$. For $H^\bullet(\g,-)$, the Chevalley--Eilenberg complex $C^\bullet(\g,-)=\Lambda^\bullet\g^*\otimes(-)$ has finite-dimensional $\Lambda$-factor, so it commutes with directed colimits, and so does its cohomology (directed colimits are exact). For $H^\bullet(\Gamma,-)$, the compact nilmanifold $M=\Gamma\backslash G$ is a closed aspherical manifold, hence a finite $K(\Gamma,1)$, so $\C$ admits a resolution $P_\bullet\to\C$ by finitely generated free $\C\Gamma$-modules \cite[Chapter~VIII, Proposition~6.3]{Bro82}. Then $H^q(\Gamma,-)=H^q(\mathrm{Hom}_{\C\Gamma}(P_\bullet,-))$, and each $\mathrm{Hom}_{\C\Gamma}(P_q,-)$ commutes with directed colimits since $P_q$ is finitely generated. Thus $H^q(\Gamma,\Opoly)=\varinjlim_m H^q(\Gamma,W_m^{(p)})$ and
\[
H^q(\g,\Opoly)=\varinjlim_m H^q(\g,W_m^{(p)}),
\]
compatibly with $j$. Each $W_m^{(p)}$ is finite-dimensional and unipotent, so $j_{W_m^{(p)}}$ is an isomorphism by Lemma~\ref{lemma:devissage}. The colimit of the isomorphisms is (a).
\medskip

\textbf{(b)} $H^q(\g,\Opoly)\cong\Hrat^q(\GC,\Opoly)\cong\Hrat^q(\GC,\ind_{\Gzo}^{\GC} \mathcal{E} )$. Real and complex Lie-algebra cohomology agree on a complex module. For a $\C$-vector space $V$, restriction along $\g\hookrightarrow\g_\C$ gives
\[
C^q(\g_\C,V)=\mathrm{Hom}_\C(\Lambda^q_\C\g_\C,V)\ \xrightarrow{\ \sim\ }\ \mathrm{Hom}_\R(\Lambda^q_\R\g,V)=C^q(\g,V),
\]
because $\Lambda^q_\C\g_\C=(\Lambda^q_\R\g)\otimes_\R\C$ and an $\R$-linear map into the complex space $V$ extends uniquely $\C$-linearly. The $\g_\C$-action on $\Opoly$ is $\C$-linear and extends the $\g$-action (Proposition~\ref{prop:model}, part (d)), so the two differentials correspond and $H^q(\g,\Opoly)=H^q(\g_\C,\Opoly)$.

For the unipotent group $\GC$ in characteristic zero and a finite-dimensional rational module $W$, Hochschild's theorem gives a natural isomorphism $\Hrat^q(\GC,W)\cong H^q(\g_\C,W)$ \cite[Section~5]{Hoc61}. Apply it to $W=W_m^{(p)}$. Both sides commute with $\Opoly=\varinjlim_m W_m^{(p)}$. The left side does so because rational cohomology is computed by the Hochschild complex $C^n(\GC,-)=\C[\GC]^{\otimes n}\otimes(-)$ on rational modules, which commutes with directed colimits. The right one does so as above. Hence
\[
\Hrat^q(\GC,\Opoly)=\varinjlim_m\Hrat^q(\GC,W_m^{(p)})\cong\varinjlim_m H^q(\g_\C,W_m^{(p)})=H^q(\g_\C,\Opoly).
\]
Finally $\Opoly\cong\ind_{\Gzo}^{\GC} \mathcal{E}$ as rational $\GC$-modules by Proposition~\ref{prop:model}, part (c), which gives the second isomorphism.
\medskip

\textbf{(c)} $\Hrat^q(\GC,\ind_{\Gzo}^{\GC} \mathcal{E})\cong\Hrat^q(\Gzo, \mathcal{E})$.
Algebraic induction $\ind_{\Gzo}^{\GC}=(\C[\GC]\otimes-)^{\Gzo}$ is right adjoint to restriction, hence left exact, and sends injectives to injectives, and Frobenius reciprocity gives $(\ind_{\Gzo}^{\GC}W)^{\GC}=W^{\Gzo}$. Its derived functors are the sheaf cohomology of the associated bundle, $R^t\ind_{\Gzo}^{\GC}W=H^t(X,\GC\times_{\Gzo}W)$. Since $X=\GC/\Gzo$ is affine (Proposition~\ref{prop:model}, part (d)) these vanish for $t>0$, so $\ind_{\Gzo}^{\GC}$ is exact \cite{CPS77}. The Grothendieck spectral sequence \cite{Gro57} of the composite $(-)^{\GC}\circ\ind_{\Gzo}^{\GC}=(-)^{\Gzo}$ gives
\[
E_2^{s,t}=\Hrat^s\!\big(\GC,\,R^t\ind_{\Gzo}^{\GC} \mathcal{E} \big)\ \Longrightarrow\ \Hrat^{s+t}(\Gzo, \mathcal{E}),
\]
which collapses to the edge isomorphism $\Hrat^q(\GC,\ind_{\Gzo}^{\GC} \mathcal{E})\cong\Hrat^q(\Gzo, \mathcal{E})$.
\medskip

\textbf{(d)} $\Hrat^q(\Gzo, \mathcal{E} )\cong H^q(\g^{0,1}, \mathcal{E} )=H^{p,q}_{\dbar}(\g,J)$.
$\Gzo=\exp\g^{0,1}$ is unipotent with Lie algebra $\g^{0,1}$ and $\mathcal{E}$ is a finite-dimensional rational $\Gzo$-module, so Hochschild's theorem gives $\Hrat^q(\Gzo, \mathcal{E})\cong H^q(\g^{0,1}, \mathcal{E})$.

Finally we identify the target. By Section~\ref{sec:invdolbeault}, the underlying vector
spaces agree, $C^q(\g^{0,1}, \mathcal{E})=\Lambda^{p,q}\g_\C^*$, and under this identification the
Chevalley--Eilenberg differential of $\g^{0,1}$ with values in $\mathcal{E}$ is the operator
$\dbar$. Hence $H^q(\g^{0,1}, \mathcal{E})=H^{p,q}_{\dbar}(\g,J)$, of dimension
$h^{p,q}(\g,J)$.

Combining (a)--(d) gives $\dim_\C H^q(\Gamma,\Opoly)=h^{p,q}(\g,J)$, which is Theorem~\ref{thm:E2}.
\end{proof}

\section{Proof of Theorem~\ref{thm:main}}\label{sec:proof}

\begin{proof}
Compactness of $M$ implies that each $H^{p,q}_{\dbar}(M)$ is finite-dimensional. By Theorem~\ref{thm:E1}, the map from $H^q(\Gamma,\Opoly)$ to $H^{p,q}_{\dbar}(M)$ is surjective, and, by Theorem \ref{thm:E2}, we have the equality $\dim H^q(\Gamma,\Opoly) = h^{p,q} (\g, J)$. Hence, we get
\[
h^{p,q}(M)\ \le\ \dim H^q(\Gamma,\Opoly) = h^{p,q} (\g, J).
\]
The opposite inequality follows from injectivity of the map $\iota \colon H^{p,q}_{\dbar}(\g,J)\to H^{p,q}_{\dbar}(M)$ proved in \cite{CF01}. Therefore, $h^{p,q}(M)=h^{p,q}(\g,J)$, and the injection $\iota$ between finite-dimensional spaces of the same dimension is an isomorphism.
\end{proof}

\section{Consequences and applications}\label{sec:consequences}

In this section, we collect results that follow from Theorem~\ref{thm:main}.

\subsection{Deformation theory.}\label{sec:cons:tangent}
Deformation theory of complex structures on a nilmanifold is controlled by Dolbeault cohomology with values in $\Lambda^pT^{1,0}$, which is invariant as a consequence of Theorem~\ref{thm:main}.

\begin{theorem}\label{thm:tangent}
Let $M=\Gamma\backslash G$ be a compact nilmanifold with a left-invariant complex structure $J$ and $\dim_\C M=n$. For all $(p,q)$, the inclusion of invariant sections
\[
\iota\colon H^q\big(\g^{0,1},\Lambda^p\g^{1,0}\big)\longrightarrow H^q\big(M,\Lambda^pT^{1,0}M\big)
\]
is an isomorphism. In particular, the inclusion
\[
\big(\Lambda^{0,\bullet}\g^*\otimes\g^{1,0},\ \dbar,\ [\,\cdot\,,\cdot\,]\big)
\hookrightarrow
\big(A^{0,\bullet}(T^{1,0}M),\ \dbar,\ [\,\cdot\,,\cdot\,]\big)
\]
of the invariant sub-DGLA into the Kodaira--Spencer DGLA of $M$ is a
quasi-isomorphism.
\end{theorem}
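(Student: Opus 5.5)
The plan is to deduce Theorem~\ref{thm:tangent} from Theorem~\ref{thm:main} via Serre-type duality with the canonical bundle, which is holomorphically trivial on a nilmanifold. Since $G$ is nilpotent, $\g$ is unimodular, and the left-invariant $(n,0)$-form $\zeta^1\wedge\cdots\wedge\zeta^n$ is $\dbar$-closed: $\dbar\zeta^{1\cdots n}$ is, up to sign, the trace of $\pr^{1,0}\circ\mathrm{ad}_{\bar X}$ on $\g^{1,0}$, which vanishes by nilpotency. Contraction with this invariant holomorphic volume form gives an isomorphism of holomorphic bundles
\[
\Lambda^pT^{1,0}M\;\cong\;\Lambda^{n-p}T^{*1,0}M,
\]
and, because it is defined by an invariant $\dbar$-closed form, an isomorphism of $\g^{0,1}$-modules $\Lambda^p\g^{1,0}\cong\Lambda^{n-p}(\g^{1,0})^*\otimes\Lambda^n\g^{1,0}$. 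The factor $\Lambda^n\g^{1,0}$ is the trivial $\g^{0,1}$-module by the same trace computation.

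Next, contraction commutes with $\dbar$ both on $A^{0,q}(\Lambda^pT^{1,0}M)\to A^{n-p,q}(M)$ and on the invariant level $\Lambda^q(\g^{0,1})^*\otimes\Lambda^p\g^{1,0}\to\Lambda^{n-p,q}\g_\C^*$. Moreover, these two isomorphisms of complexes are compatible with the inclusions of invariant forms. The map $\iota$ of the statement is therefore conjugate to the map $\iota$ of \eqref{eq:iota} in bidegree $(n-p,q)$, which is an isomorphism by Theorem~\ref{thm:main}. The one point needing care is the claim that $\dbar$ of a contraction equals the contraction of $\dbar$. This follows from the Leibniz rule, since the volume form is holomorphic. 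On the invariant side it follows from the fact that $\mathcal{E}$-valued Chevalley--Eilenberg differentials are natural with respect to $\g^{0,1}$-module maps.

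The DGLA statement is then the case $p=1$. The inclusion is a morphism of DGLAs, because invariant $(0,q)$-forms with values in $\g^{1,0}$ are closed under $\dbar$ and under the Schouten/Frölicher--Nijenhuis bracket, which is computed algebraically via $\g_\C$. A morphism of DGLAs inducing an isomorphism on $H^q(\,\cdot\,,T^{1,0})$ for all $q$ is by definition a quasi-isomorphism.

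The main obstacle is verifying the $\g^{0,1}$-triviality of $\Lambda^n\g^{1,0}$, since everything else is formal. I would check it directly: for $\bar X\in\g^{0,1}$ the induced action is multiplication by $\mathrm{tr}(\pr^{1,0}\mathrm{ad}_{\bar X}|_{\g^{1,0}})$. The endomorphism $\mathrm{ad}_{\bar X}$ is nilpotent on $\g_\C$ and preserves $\g^{0,1}$, so its trace on the quotient $\g_\C/\g^{0,1}$ is zero.
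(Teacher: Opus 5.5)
Your proposal is correct and follows the paper's proof. Contraction with a left-invariant holomorphic volume form identifies both the invariant complex and the full Dolbeault complex of $\Lambda^pT^{1,0}M$ with those of $(n-p,\bullet)$-forms, compatibly with the inclusions. Theorem~\ref{thm:main} in bidegree $(n-p,q)$ then gives the first claim, and the case $p=1$ gives the DGLA statement.

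The only difference is that you verify $\dbar$-closedness of $\zeta^1\wedge\cdots\wedge\zeta^n$ directly, whereas the paper cites \cite[Theorem~2.7]{BDV09}. Your argument is sound: the $\dbar$ of this form is minus the trace of $Y\mapsto\pr^{1,0}[\bar X,Y]$, and that trace vanishes because $\mathrm{ad}_{\bar X}$ is nilpotent and preserves $\g^{0,1}$.
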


\begin{proof}
By \cite[Theorem~2.7]{BDV09}, the canonical bundle of $M$ admits a nowhere-vanishing holomorphic section $\eta$, which is left-invariant. Contraction with $\eta$
\[
c\colon \Lambda^pT^{1,0}M\longrightarrow \Omega^{n-p}_M,\qquad \alpha\longmapsto \iota_\alpha\eta,
\]
is $\mathcal O_M$-linear and an isomorphism of holomorphic vector bundles. Since $\dbar(\iota_\alpha\eta) = \iota_{\dbar\alpha}\eta$, $c$ induces an isomorphism of Dolbeault complexes
\[
\big(A^{0,\bullet}(\Lambda^pT^{1,0}M),\dbar\big)\;\xrightarrow{\ \sim\ }\;\big(A^{n-p,\bullet}(M),\dbar\big).
\]
By left-invariance of $\eta$, $\Lambda^{0,\bullet}\g^*\otimes\Lambda^p\g^{1,0}$ is isomorphic to $\Lambda^{n-p,\bullet}\g^*$ and commutes with the two inclusions of invariant forms. Hence, the first claim is Theorem~\ref{thm:main} applied in bidegree $(n-p,q)$.

Since invariant sections are closed under the Schouten bracket and the bracket on $M$
restricts to the invariant one, the inclusion is a morphism of DGLAs. It is a quasi-isomorphism by the first part of this proof with $p=1$.
\end{proof}

The Kodaira--Spencer algebra governs the deformations of $M$ up to homotopy. We now describe these deformations explicitly, and in doing so settle the second part of a conjecture of Hasegawa. In \cite{Has10}, Hasegawa proposed the following.

\medskip
\noindent\textbf{Conjecture} (Hasegawa)\textbf{.}
\begin{em}
\begin{enumerate}[label=(\roman*)]
\item  All the left-invariant complex structures on even-dimensional simply connected unimodular solvable Lie groups
(nilpotent Lie groups) are Stein (biholomorphic to $\C^n$ respectively).
\item Small deformations of left-invariant complex structures on
even-dimensional nilmanifolds are all left-invariant.
\end{enumerate}
\end{em}
\medskip

\noindent
The nilpotent case of \textup{(i)} is proved in \cite[Theorem~1.2]{HRSTW26}, in the stronger form that the biholomorphism to $\C^n$ may be taken polynomial with polynomial inverse. The solvable case of \textup{(i)} has turned out to be invalid.
A counterexample of dimension 6 with a uniform lattice is given in \cite{KanSolvable}. Note that the same counterexample
had been found earlier by Sillari and Tomassini \cite{STinprep}. The following theorem describes small deformations of a nilmanifold in detail and, in particular, proves \textup{(ii)}.

\begin{theorem}\label{thm:hasegawa}
Let $M=\Gamma\backslash G$ be a compact nilmanifold with left-invariant complex structure $J$ and $\dim_\C M=n$, and let $\{M_t\}_{t\in B}$ be a smooth family of compact complex manifolds with $M_0=M$. Then there is a neighborhood $U$ of $0$ in $B$ and, for each $t\in U$, a group $\Gamma_t$ of polynomial automorphisms of $\C^n$
such that
\[
M_t\ \cong\ \Gamma_t \backslash \C^n \quad \text{with} \quad \Gamma_0=\rho(\Gamma).
\]
Each $\Gamma_t$ is a uniform lattice in a group $G_t\subset\Aut_{\mathrm{pol}}(\C^n)$ acting simply transitively on $\C^n$ by polynomial automorphisms. The degrees of the elements of $\Gamma_t$ are bounded by a constant depending only on $\g$, uniformly in
$t$. Furthermore, we have $\Gamma_t\cong\Gamma$ as abstract groups.
\end{theorem}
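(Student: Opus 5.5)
Via Kuranishi theory, together with the quasi-isomorphism of Theorem~\ref{thm:tangent}, I will show that every small deformation of $J$ is a left-invariant complex structure $J_t$ on the same $G$. After that, Theorem~\ref{thm:hrstw} and Proposition~\ref{prop:uniform:degree}, applied to $(G,J_t)$, produce the polynomial models. The steps are as follows.

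\emph{Step 1 (invariant Kuranishi family).} Fix a left-invariant Hermitian metric. Kuranishi's construction solves $\varphi(t)=\sum_k \varphi_k t^k$ with $\dbar\varphi+\tfrac12[\varphi,\varphi]=0$, using the Green operator $\dbar^*\mathbb G$ and harmonic projection. By Theorem~\ref{thm:tangent} with $p=1$, $H^1(M,T^{1,0}M)$ is represented by invariant harmonic forms in $\Lambda^{0,1}\g^*\otimes\g^{1,0}$, and $H^2$ likewise. Since the metric is invariant, $\dbar^*$ preserves invariant forms. The invariant subcomplex is finite dimensional and $\dbar$-stable, and by Theorem~\ref{thm:tangent} its cohomology equals that of $M$. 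Hence the Laplacian restricted to the invariant forms has the same harmonic space, and $\mathbb G$ preserves invariant forms, being the inverse of $\Box$ on the orthogonal complement of the harmonic space inside the finite-dimensional invariant subspace. Note that $\Box$ commutes with pullbacks by the transitive $G$-action only locally, but $\Box$ preserves invariance, so this suffices. Therefore every Kuranishi iterate $\varphi_k$ is invariant, and the obstruction map lands in invariant $H^2$. The Kuranishi space is then cut out inside invariant $H^1$, and each $\varphi(t)$ is a left-invariant integrable $(0,1)$-vector form. By Kuranishi completeness, any family $M_t$ is locally pulled back from this family, so for $t$ near $0$ the manifold $M_t$ is biholomorphic to $(\Gamma\backslash G, J_t)$, where $J_t$ is left-invariant with $\g^{0,1}_t=\{\bar X+\varphi(t)\bar X\}$.

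\emph{Step 2 (polynomial model).} Apply Theorem~\ref{thm:hrstw} to $(G,J_t)$ to get $\Phi_t$, and set $\rho_t(g)=\Phi_t L_g\Phi_t^{-1}$. By Lemma~\ref{lemma:rho:pol}, each $\rho_t(g)$ lies in $\Aut_{\mathrm{pol}}(\C^n)$. Put $G_t=\rho_t(G)$, which acts simply transitively, and $\Gamma_t=\rho_t(\Gamma)$. Since $\rho_t$ is injective, $\Gamma_t\cong\Gamma$, and $\Gamma_t$ is a uniform lattice in $G_t$. At $t=0$ we may take $\Phi_0=\Phi$, so $\Gamma_0=\rho(\Gamma)$. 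Proposition~\ref{prop:uniform:degree} gives the degree bound $D$, which depends only on the nilpotency step of $\g$ and therefore not on $t$.

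\emph{Main obstacle.} The delicate step is Step~1, namely ensuring that the Green operator and harmonic projection preserve invariant forms. I would argue purely from finite-dimensional linear algebra as above. The argument uses Theorem~\ref{thm:tangent} to match harmonic spaces, and uses that $\Box$ maps the invariant subspace into itself and is self-adjoint there with the restricted inner product. The invariant part of the Kuranishi map then uses the same Green operator, so the family is invariant, and completeness yields the conclusion.
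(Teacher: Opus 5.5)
Your proposal is correct and follows the same route as the paper. The paper gets left-invariance of the small deformations by citing \cite[Theorem~2.6]{RolJLMS}, whose hypothesis Theorem~\ref{thm:main} supplies; your Step~1 essentially reproves that result inline, using invariant harmonic spaces and an invariant Green operator obtained from Theorem~\ref{thm:tangent}, followed by Kuranishi completeness. Your Step~2 coincides with the paper's argument via Theorem~\ref{thm:hrstw}, Lemma~\ref{lemma:rho:pol} and Proposition~\ref{prop:uniform:degree}.
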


\begin{proof}
By Theorem~\ref{thm:main} the map $\iota$ is an isomorphism in every bidegree, so the hypothesis of \cite[Theorem~2.6]{RolJLMS} is satisfied. Hence, after shrinking $B$ to some $U$, every $M_t$ is again a nilmanifold with left-invariant complex structure, with the same Lie algebra $\g$ and lattice $\Gamma$, that is, $M_t=(\Gamma \backslash G,J_t)$ for a family $J_t$ of left-invariant complex structures on $\g$ with $J_0=J$.

Apply \cite[Theorem~1.2]{HRSTW26} to each $(\g,J_t)$ to obtain a biholomorphism $\Phi_t\colon(G,J_t)\to\C^n$, polynomial with polynomial inverse, of degree bounded in terms of $\g$ alone.

Set
\[
\rho_t(\gamma)=\Phi_t\circ L_\gamma\circ\Phi_t^{-1}, \quad
G_t=\rho_t(G) \quad \text{and} \quad \Gamma_t=\rho_t(\Gamma).
\]
Proposition~\ref{prop:uniform:degree} applied to $\Phi_t$ bounds the $w$-degree of each $\rho_t(g)$ by a constant uniform in $t$. The group $G_t$ acts simply transitively on $\C^n$ by polynomial automorphisms, $\Gamma_t$ is discrete and cocompact in $G_t$, and $\rho_t$ is injective, so $\Gamma_t\cong\Gamma$. Finally $\Phi_t$ descends to a biholomorphism $M_t\cong\C^n/\Gamma_t$, and $\rho_0=\rho$.
\end{proof}

The deformation-theoretic content of Theorem~\ref{thm:hasegawa} is due to Rollenske \cite{RolJLMS}, and the biholomorphism $(G,J_t)\cong\C^n$ to \cite{HRSTW26}. Theorem~\ref{thm:main} supplies the hypothesis assumed in the former and makes the latter available for every fiber of the deformation.

\begin{remark}
The lattice is rigid in a strong sense. Not only is $\Gamma_t\cong\Gamma$ as an abstract group, but the deformation is controlled entirely by the embedding $\rho_t\colon\Gamma\to\Aut_{\mathrm{pol}}(\C^n)$, with uniform degree bound along the family.
\end{remark}

\subsection{The double complex of invariant forms.}\label{sec:cons:double}
Theorem~\ref{thm:main} is an isomorphism of Dolbeault cohomologies. We now show that $\iota$ respects the entire bigraded structure of the double complex, so that every invariant derived from it is computed by invariant forms. Let $(A^{\bullet,\bullet},\partial,\dbar)$ be a double complex, with the two spectral sequences whose first pages are $E_1^{p,q}=H^{p,q}_{\dbar}(A)$ and ${}'E_1^{p,q}=H^{p,q}_{\partial}(A)$.

\begin{definition}[\cite{Ste21}]\label{def:E1iso}
A morphism $f\colon A\to B$ of double complexes is an \emph{$E_1$-isomorphism} if the induced maps
\[
H^{p,q}_{\partial}(f)\colon H^{p,q}_{\partial}(A)\longrightarrow H^{p,q}_{\partial}(B) \quad \text{and} \quad
H^{p,q}_{\dbar}(f)\colon H^{p,q}_{\dbar}(A)\longrightarrow H^{p,q}_{\dbar}(B),
\]
are isomorphisms for all $(p,q)$ or, equivalently, if $f$ induces an isomorphism on the first page of each of the two spectral sequences above.
\end{definition}

For bounded double complexes, both filtrations are bounded, so an $E_1$-isomorphism induces isomorphisms (compatible with all differentials $d_r$) on every page $E_r$, $1\le r\le\infty$ of both spectral sequences, as well as on the abutment.

Let $A^{\bullet,\bullet}_{\g} \coloneqq \Lambda^{\bullet,\bullet}\g_\C^*$ be the Chevalley--Eilenberg double complex of $(\g,J)$ and $A^{\bullet,\bullet}(M)$ the double complex of smooth forms, both endowed with the standard differentials $(\partial, \dbar)$.

\begin{theorem}\label{thm:E1iso}
For every compact nilmanifold with left-invariant complex structure, the inclusion
\[
\iota\colon\big(A^{\bullet,\bullet}_{\g},\partial,\dbar\big)\longrightarrow
\big(A^{\bullet,\bullet}(M),\partial,\dbar\big)
\]
is an $E_1$-isomorphism of bounded double complexes.
\end{theorem}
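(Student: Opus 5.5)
First we check that $\iota$ is a morphism of double complexes, and that both complexes are bounded. Left-invariant forms are closed under $d$, and the splitting $d=\partial+\dbar$ on $\Lambda^{\bullet,\bullet}\g_\C^*$ from Section~\ref{sec:invdolbeault} is the restriction of the splitting on $A^{\bullet,\bullet}(M)$. Hence $\iota$ commutes with $\partial$ and with $\dbar$ separately and preserves bidegree. Both complexes vanish outside $0\le p,q\le n$, so they are bounded. By Definition~\ref{def:E1iso}, it then remains to show that $H^{p,q}_{\dbar}(\iota)$ and $H^{p,q}_{\partial}(\iota)$ are isomorphisms for all $(p,q)$.

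The $\dbar$ statement is exactly Theorem~\ref{thm:main}.

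For the $\partial$ statement, we use complex conjugation. On $A^{\bullet,\bullet}(M)$, conjugation $\alpha\mapsto\bar\alpha$ is a real-linear bijection $A^{p,q}(M)\to A^{q,p}(M)$ satisfying $\overline{\partial\alpha}=\dbar\bar\alpha$. It therefore induces a conjugate-linear isomorphism
\[
H^{p,q}_{\partial}(M)\cong \overline{H^{q,p}_{\dbar}(M)}.
\]
The same holds on $\Lambda^{\bullet,\bullet}\g_\C^*$. Conjugation there is the one induced from the real structure $\g\subset\g_\C$, which exchanges $\g^{1,0}$ and $\g^{0,1}$ and commutes with the Chevalley--Eilenberg differential $d$. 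Since $d$ is real, conjugation swaps its $(1,0)$ and $(0,1)$ components, giving
\[
H^{p,q}_{\partial}(\g,J)\cong\overline{H^{q,p}_{\dbar}(\g,J)}.
\]
Because $\iota$ commutes with conjugation (the conjugate of an invariant form is invariant), we get a commutative square. Its horizontal arrows are $H_\partial^{p,q}(\iota)$ and $H_{\dbar}^{q,p}(\iota)$, and its vertical arrows are the conjugation isomorphisms. Theorem~\ref{thm:main} in bidegree $(q,p)$ makes the second horizontal arrow bijective, so the first one is bijective too. Conjugate-linearity of the vertical maps causes no problem, since bijectivity is a set-theoretic property.

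This completes the proof; there is no genuine obstacle, as all the analytic content has already been absorbed into Theorem~\ref{thm:main}. The one point to write carefully is the invariant-form side. The relevant structure is $J$ on the real Lie algebra $\g$, so the conjugation on $\g_\C=\g\otimes\C$ does swap $\g^{1,0}$ and $\g^{0,1}$, and one should note that it intertwines $\partial$ with $\dbar$ there. This follows because $d$ is real and preserves the bigrading decomposition $d=\partial+\dbar$.
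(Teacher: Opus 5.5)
Your proposal is correct and follows the paper's proof. The $\dbar$-part is Theorem~\ref{thm:main}, and the $\partial$-part follows because complex conjugation preserves invariant forms, commutes with $\iota$, and identifies $H^{p,q}_{\partial}$ with $\overline{H^{q,p}_{\dbar}}$ on both sides; your explicit checks that $\iota$ is a bigraded morphism and that both complexes are bounded are left implicit in the paper.
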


\begin{proof}
Complex conjugation is an antilinear involution of $A^{\bullet,\bullet}(M)$ which
exchanges $\partial$ and $\dbar$ and the bidegrees $(p,q)\leftrightarrow(q,p)$. Since
$\g$ is a real form of $\g_\C$, it preserves $A^{\bullet,\bullet}_\g$ and commutes
with $\iota$. Hence $H^{p,q}_{\partial}\cong\overline{H^{q,p}_{\dbar}}$ on both sides,
compatibly with $\iota$, and the $\partial$-isomorphism follows from the
$\dbar$-isomorphism of Theorem~\ref{thm:main}.
\end{proof}

In the next statement, $H^{\bullet,\bullet}_{BC}$ and $H^{\bullet,\bullet}_{A}$ denote Bott--Chern and Aeppli cohomologies. The groups $\mathsf A^{p,q},\dots,\mathsf F^{p,q}$ are the six bigraded spaces of Varouchas \cite[Section 3.1]{Var86}, which measure the failure of the natural maps $H^{\bullet,\bullet}_{BC}\to H^{\bullet,\bullet}_{\dbar}\to H^{\bullet,\bullet}_{A}$ to be isomorphisms. Finally, $H^{k}_{S_{p,q}}$ is the cohomology of the complex $\mathcal L^{\bullet}_{p,q}$ of Bigolin--Schweitzer \cite{Big69, Big70, Pio25, Sch07}. All of these are functors of the double complex alone and vanish on squares, that is, they are cohomological functors in the sense of \cite{Ste22, Ste25}.

\begin{corollary}\label{cor:zigzag}
The double complexes $(A^{\bullet,\bullet}_\g, \partial, \dbar)$ and $(A^{\bullet,\bullet}(M), \partial, \dbar)$ have the same multiplicities of all zigzags in the decomposition of \cite{Ste21}. Consequently, every invariant of a bounded double complex that is determined by the zigzag multiplicities is computed by invariant forms. In particular, the inclusion $\iota$ induces isomorphisms
\[
H^{p,q}_{\star}(\g,J)\;\xrightarrow{\ \sim\ }\;H^{p,q}_{\star}(M),
\qquad \star\in\{\partial,\dbar,BC,A\},
\]
in every bidegree, and isomorphisms $E_r^{p,q}(\g,J)\cong E_r^{p,q}(M)$, $1\le r\le\infty$, of both spectral sequences, compatible with all differentials $d_r$ and with the abutment. The same holds for the six Varouchas groups and for the Bigolin cohomologies.

In particular, two non-commensurable lattices
in the same $G$ give the same invariants.
\end{corollary}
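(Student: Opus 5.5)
The plan is to deduce everything from Theorem~\ref{thm:E1iso} via Stelzig's structure theory. By \cite{Ste21}, every bounded double complex over $\C$ with finite-dimensional cohomologies decomposes, up to isomorphism, as a direct sum of indecomposables. These indecomposables are squares and zigzags (dots, lines, L-shapes, and so on). The key input is the characterization in \cite{Ste21}: a morphism of bounded double complexes is an $E_1$-isomorphism exactly when it induces a bijection on the zigzag multiplicities, since squares are invisible to $E_1$.

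First, both complexes satisfy the finiteness hypotheses. $A_\g$ is finite-dimensional. For $A(M)$, ellipticity on the compact $M$ gives finite-dimensional $H_{\dbar}$ and $H_\partial$. Second, we invoke Theorem~\ref{thm:E1iso} and the cited characterization to conclude that the zigzag multiplicities of $A_\g$ and $A(M)$ agree in every shape and position. This is the step needing the most care. The delicate point is that $A(M)$ is infinite-dimensional, so I would check that the version of Stelzig's theorem used applies to bounded complexes whose $E_1$ pages are finite-dimensional, not only to finite-dimensional complexes. With that version, the zigzag multiplicities of a complex are finite and are determined by the ranks of the maps in the $E_1$ pages. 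If only the finite-dimensional version were available, I would fall back on the direct argument below.

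Third, I would identify each listed invariant as a cohomological functor in the sense of \cite{Ste22, Ste25}, namely an additive functor vanishing on squares. Such a functor is determined by zigzag multiplicities. This covers $H_{BC}$, $H_A$, all pages $E_r$ of both spectral sequences, the Varouchas groups $\mathsf A,\dots,\mathsf F$, and the Bigolin--Schweitzer cohomologies $H_{S_{p,q}}$.

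To promote equality of dimensions to isomorphisms induced by $\iota$, I would use the following argument. A morphism that is an $E_1$-isomorphism is a \emph{quasi-isomorphism for every cohomological functor}. To see this, write $A(M)\cong \iota(A_\g)\oplus C$ at the level of decompositions; by \cite{Ste21} $E_1$-isomorphisms admit such splittings up to squares. Then the cone, or complement, $C$ has no zigzags, so it is a sum of squares and is killed by every such functor. For the spectral sequences, the statement follows directly from boundedness, as remarked after Definition~\ref{def:E1iso}, and compatibility with $d_r$ is automatic.

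Finally, for the last sentence, I would note that $A_\g$ depends only on $(\g,J)$ and not on $\Gamma$. Applying the above to two lattices $\Gamma_1,\Gamma_2$ in $G$ gives $H_\star(\Gamma_1\backslash G)\cong H_\star(\g,J)\cong H_\star(\Gamma_2\backslash G)$, regardless of commensurability.
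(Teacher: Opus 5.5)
Your proposal is correct and follows the paper's route: apply \cite{Ste21} to the $E_1$-isomorphism of Theorem~\ref{thm:E1iso}, read off the listed invariants as functors vanishing on squares, and use that $A_\g$ does not depend on $\Gamma$. Your splitting argument makes explicit that the isomorphisms are induced by $\iota$, a point the paper leaves implicit: the quotient $A(M)/\iota(A_\g)$ has vanishing $H_{\dbar}$ and $H_\partial$ by the long exact sequences, so it is a sum of squares, which are projective and therefore split off. Note that this ``fallback'' itself relies on Stelzig's decomposition of an infinite-dimensional bounded complex, which is harmless because that theorem holds for arbitrary bounded double complexes.
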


\begin{proof}
$A^{\bullet,\bullet}(M)$ is bounded and the zigzag multiplicities are finite because $M$ is compact. The statement about zigzag multiplicities is then \cite{Ste21} applied to Theorem~\ref{thm:E1iso}. The listed isomorphisms are the corresponding invariants.

By noting that the invariant complex is independent of the lattice, we get the last statement.
\end{proof}

\begin{remark}
The case $\star\in\{BC,A\}$ of Corollary~\ref{cor:zigzag} is \cite[Conjecture~3.10]{Ang13}, which is now proved. The implication $\dbar\Rightarrow BC$ had been established by Angella \cite[Theorem~3.7]{Ang13} for solvmanifolds, and Corollary~\ref{cor:zigzag} may equally be deduced from that result together with Nomizu's theorem.
\end{remark}

\printbibliography

\end{document}